\newtheorem{thm}{\bf Theorem} %[section]
\newtheorem{rem}[thm]{\bf Remark}
\newtheorem{ass}[thm]{\bf Assumption}
\newtheorem{prop}[thm]{\bf Proposition}
\newtheorem{dfn}[thm]{\bf Definition}
\newtheorem{eg}[thm]{\bf Example}
\title{\LARGE \bf
Verifying Probabilistic Regions of Attraction with Neural Lyapunov Functions for Stochastic Systems
}
\author{Yun Su, Hans De Sterck, and Jun Liu  % <-this % stops a space
\thanks{This work was supported in part by the NSERC of Canada.}% <-this % stops a space
\thanks{Yun Su, Hans De Sterck, and Jun Liu are with Department of Applied Mathematics, 
        University of Waterloo, 200 University Avenue, Waterloo, ON, Canada N2L 3G1
        {\tt\small yun.su@uwaterloo.ca, hans.desterck@uwaterloo.ca,  j.liu@uwaterloo.ca}}%
}
\begin{document}

\maketitle
\thispagestyle{empty}
\pagestyle{empty}

%%%%%%%%%%%%%%%%%%%%%%%%%%%%%%%%%%%%%%%%%%%%%%%%%%%%%%%%%%%%%%%%%%%%%%%%%%%%%%%%
\begin{abstract}
Leveraging a stochastic extension of Zubov's equation, we develop a physics-informed neural network (PINN) approach for learning a neural Lyapunov function that captures the largest probabilistic region of attraction (ROA) for stochastic systems. We then provide sufficient conditions for the learned neural Lyapunov functions that can be readily verified by satisfiability modulo theories (SMT) solvers, enabling formal verification of both local stability analysis and probabilistic ROA estimates. By solving Zubov's equation for the maximal Lyapunov function, our method provides more accurate and larger probabilistic ROA estimates than traditional sum-of-squares (SOS) methods. Numerical experiments on nonlinear stochastic systems validate the effectiveness of our approach in training and verifying neural Lyapunov functions for probabilistic stability analysis and ROA estimates.
\end{abstract}

\begin{keywords}
Stochastic systems; Probabilistic region of attraction; Neural Lyapunov functions; Physics-informed neural networks; Stochastic Zubov's equation; Formal verification; SMT solvers
\end{keywords}

%%%%%%%%%%%%%%%%%%%%%%%%%%%%%%%%%%%%%%%%%%%%%%%%%%%%%%%%%%%%%%%%%%%%%%%%%%%%%%%%
\section{Introduction}
Recent advances in neural networks and machine learning have transformed computational research, including the computational aspects of control systems. For deterministic systems, the work \cite{chang2019neural} proposes a learning framework to simultaneously stabilize nonlinear systems with a linear neural controller and learn a neural Lyapunov
function to certify a region of attraction (ROA). This approach was extended to cope with systems of  unknown dynamics in \cite{zhou2022neural}. In many applications, such as power systems \cite{vournas2006region}, robotics \cite{manchester2011regions}, and biological systems \cite{matthews2012region}, accurate estimates of the ROA are essential because they provide critical information for system stability. In \cite{vannelli1985maximal}, a maximal Lyapunov function was introduced that can exactly characterize the ROA. Recent works  \cite{liu2023towards,liu2025physics,kang2023data} have shown that neural network approximations to the solution of Zubov's partial differential equation (PDE) can effectively capture maximal Lyapunov functions, thereby characterizing the entire ROA. Consider the potential approximation error of neural Lyapunov functions, the works \cite{liu2023towards,liu2025physics} also formally verify the satisfaction for Lyapunov condition by satisfiability modulo theories (SMT) solvers.
While these techniques enable tasks such as constructing Lyapunov functions for stability analysis and control design in \cite{chang2019neural, zhou2022neural,  abate2020formal, gaby2022lyapunov, ravanbakhsh2019learning, liu2024tool, dawson2023safe} within deterministic frameworks, many real-world systems are subject to random disturbances. Stochastic models can describe such systems more accurately than deterministic ones. Therefore, we are motivated to investigate probabilistic variants of ROA for stochastic systems.

In this paper, we extend the approach of using neural networks to approximate the solution of Zubov's PDE and the SMT-based verification for neural Lyapunov functions from deterministic systems to stochastic systems. Our goal is twofold: (1) to develop neural Lyapunov functions that approximate probabilistic regions of attraction for stochastic systems, and (2) to formally verify that these neural Lyapunov functions satisfy the stochastic Lyapunov conditions, both locally and over the largest probabilistic region of attraction, i.e., the set of points which are attracted with positive probability to the equilibrium.
\subsection{Related work}
In \cite{zubov1961methods}, V.~I. Zubov introduced a method to characterize the region of attraction using a Lyapunov function that satisfies an appropriate first order partial differential equation, known as Zubov's PDE. In the stochastic setting, the work \cite{camilli2000zubov} extends Zubov's method to stochastic systems, defines the notion of probabilistic ROA, and characterizes probabilistic ROA using the stochastic Zubov PDE. 
Several follow-up works have further explored the characterization of the probabilistic ROA. For example, the work \cite{camilli2002characterizing} builds on the theory in \cite{camilli2000zubov} and extends the framework to compute the almost sure ROA (i.e., attraction with probability one) for stochastic systems. Their Lyapunov function is computed by iterative numerical methods. Moreover, the work \cite{camilli2005zubov} employs numerical methods to compute a Lyapunov function that characterizes the probabilistic ROA for controlled stochastic systems with a prescribed probability. 

In \cite{hafstein2018lyapunov}, although the work focuses on deriving quadratic Lyapunov functions
for autonomous linear stochastic differential equations using sum-of-squares (SOS) methods, it is closely related to our study. This is because an essential step in accurately characterizing the probabilistic ROA is to compute a Lyapunov function for the linearized stochastic system. We extend their work by employing the SOS method for nonlinear stochastic systems, which serves as a comparative experiment later. 

Several works \cite{zhang2022neural, lechner2022stability, ansaripour2022learning} have employed neural networks to learn Lyapunov functions for stochastic systems. The work \cite{zhang2022neural} proposes a learning framework for neural Lyapunov functions and a linear neural stochastic controller. In \cite{lechner2022stability}, neural network–based Lyapunov functions and controllers are learned using reinforcement learning methods, and almost sure asymptotic stability is formally verified for discrete-time nonlinear stochastic control systems. Meanwhile, \cite{ansaripour2022learning} extends the methods of \cite{lechner2022stability, chang2019neural} by employing a learner-verifier framework to jointly learn a control policy and a Lyapunov function. However, to the best of our knowledge, none of these studies analyze the largest probabilistic ROA for stochastic systems. This motivates us to further investigate this area. Using neural networks to solve stochastic Zubov's equation allows us to capture the probabilistic ROA as accurately as possible.  
However, a drawback of neural network approximations is their lack of formal guarantees. To address this issue, SMT solvers such as \cite{gao2013dreal} have been used to formally verify the trained neural Lyapunov functions and provide stability guarantees with verified ROAs in \cite{liu2023towards,liu2025physics}. In this work, we also employ an SMT solver to establish stochastic stability guarantees with formally verified probabilistic ROAs. To enable this, we derive sufficient conditions for the learned neural Lyapunov functions, which is the main contribution of this paper.

%%%%%%%%%%%%%%%%%%%%%%%%%%%%%%%%%%%%%%%%%%%%%%%%%%%%%%%%%%%%%%%%%%%%%%%%%%%%%%%%
\section{Preliminaries}

\subsection{Stochastic Differential Equation and Stochastic Stability}

Consider a probability space \(\left(\Omega, \mathcal{F}, \{\mathcal{F}_t\}_{t \ge t_0}, \mathbb{P}\right)\), where \(\Omega\) is the sample space, \(\mathcal{F}\) is a \(\sigma\)-algebra, \(\{\mathcal{F}_t\}_{t \ge t_0}\) is a filtration satisfying the usual conditions, and \(\mathbb{P}\) is a probability measure. On this space, we consider the stochastic differential equation (SDE):  
\begin{equation}
\label{sde}
\begin{cases}
dX_t = f(X_t)\,dt + \sigma(X_t)\,dB_t, \quad t\ge t_0\\
X_{t_0} = x_0,
\end{cases}
\end{equation}
where \(f:\mathbb{R}^n\to\mathbb{R}^n\) is a locally Lipschitz drift term, \(\sigma:\mathbb{R}^n\to\mathbb{R}^{n\times m}\) is the diffusion coefficient, and 
\[
B_t = \left(B_1(t), \dots, B_m(t)\right)^T
\]
is an \(m\)-dimensional Brownian motion. We denote the solution of \eqref{sde} as $X_t(x_0)$ or $X(t,x_0)$. We refer the readers to \cite{khasminskii2011stochastic} for the standard conditions ensuring existence and uniqueness of solutions to equation (\ref{sde}). In addition, we make the following assumption. 
\begin{ass}
   % We make the standing assumptions on the SDE system $(1)$:
   % \begin{enumerate}
   %     \item The mappings $f$ and $g$ satisfy local Lipschitz continuity.
   %     \item $f(0) = g(0) = 0$ and $x(t,0) \equiv 0$ is the equilibrium point.
   % \end{enumerate}
   % \( f(0) =   \sigma(0) = 0 \) and  \( f, \sigma \) are continuously differentiable functions.
    The functions $f: \mathbb{R}^n \rightarrow \mathbb{R}^n$ and $\sigma: \mathbb{R}^n \rightarrow \mathbb{R}^{n \times m}$ are twice continuously differentiable functions, \( f(0) =  0\), and  \( \sigma(0) = 0 \).
\end{ass}
% Note that if Assumption 1 holds, the origin is locally almost surely exponentially stable (see [2], [15]), i.e. there exist two positive constants $r$ and $\lambda$ and a finite random variable $\beta$ such that for any $x_0 \in B(0, r)$,
% $
% |X_t(x_0)| \leq \beta e^{\lambda t} \quad \text { almost surely for any } t>0 .
% $

The following definitions are standard and can be found in \cite[Section 4.2]{mao2007stochastic}. 
\begin{dfn}[Stochastic Stability]
The trivial solution of \eqref{sde} is said to be \emph{stochastically stable} (or stable in probability) if, for every \(\varepsilon \in (0,1)\) and every \(r > 0\), there exists a \(\delta = \delta(\varepsilon, r) > 0\) such that
$$
\mathbb{P}\bigl\{\bigl|X_t(x_0)\bigr| < r \text{ for all } t \ge t_0\bigr\} \ge 1-\varepsilon,
$$
whenever \(\left|x_0\right| < \delta\). Otherwise, the trivial solution is said to be stochastically unstable.
\end{dfn}

\begin{dfn}[Stochastic Asymptotic Stability (SAS)]
{\raggedright
The trivial solution of \eqref{sde} is said to be \emph{stochastically asymptotically stable} if it is stochastically stable and, for every \(\varepsilon \in (0,1)\), there exists a \(\delta_0 = \delta_0(\varepsilon) > 0\) such that
\[
\mathbb{P}\bigl\{\lim_{t \to \infty} X_t(x_0) = 0\bigr\} \ge 1-\varepsilon,
\]
whenever \(\left|x_0\right| < \delta_0\).
\par}
\end{dfn}

\begin{dfn}[SAS in the Large]
The trivial solution of \eqref{sde} is said to be \emph{stochastically asymptotically stable in the large} if it is stochastically stable and,  for all \(x_0 \in \mathbb{R}^n\), 
\[
\mathbb{P}\bigl\{\lim_{t \to \infty} X_t(x_0) = 0\bigr\} = 1.
\]
\end{dfn}

% Note: when \(\sigma(x)=0\), equation (1) reduces to the deterministic case, and the domain of attraction is 
% $
% \mathcal{D}: = \left\{x \in \mathbb{R}^{n}: \lim_{t\to\infty}|X_t(x_0)|=0\right\}.
% $

\begin{dfn}[Differential Operator $L$]
    Define the differential operator $L$ associated with equation \eqref{sde} by
    $$
    L:=\sum_{i=1}^n f_i(x) \frac{\partial}{\partial x_i}+\frac{1}{2} \sum_{i, j=1}^n\left[\sigma(x) \sigma^T(x)\right]_{i j} \frac{\partial^2}{\partial x_i \partial x_j} .
    $$
    If $L$ acts on a function $V \in C^{2}\left(\mathcal{D} ; \mathbb{R}\right)$, then
    $$
    L V(x)=V_x(x) f(x)+\frac{1}{2} \operatorname{Tr}[\sigma^T(x) V_{x x}(x) \sigma(x)],
    $$
    where 
    $ V_x=\left(\frac{\partial V}{\partial x_1}, \cdots, \frac{\partial V}{\partial x_n}\right),$ $V_{x x}=\left(\frac{\partial^2 V}{\partial x_i \partial x_j}\right)_{n \times n}=\left(\begin{array}{ccc}
    \frac{\partial^2 V}{\partial x_1 \partial x_1} & \cdots & \frac{\partial^2 V}{\partial x_1 \partial x_n} \\
    \vdots & & \vdots \\
    \frac{\partial^2 V}{\partial x_n \partial x_1} & \cdots & \frac{\partial^2 V}{\partial x_n \partial x_n}
    \end{array}\right)$, and $\operatorname{Tr}[ \cdot ]$ indicates the trace of a matrix.
\end{dfn}
\vspace{0.3cm}
By It\^{o}'s formula, if $X_t \in \mathcal{D}$, then
\begin{equation}
    \label{ito formula}
    d V(X_t)=L V(X_t) d t+V_x(X_t) \sigma(X_t) d B_t.
\end{equation}

\begin{thm}{\cite[Theorem 2.2]{mao2007stochastic}}
    \label{mao thm2.2}
    If there exists a positive-definite function $V\in$ $C^{2}\left(\mathcal{D} ; \mathbb{R}_{+}\right)$ such that $L V(x) \leq 0$ for all $x\in \mathcal {D}$, where $\mathcal D$ is an open set containing the origin, then the trivial solution of  \eqref{sde} is stochastically stable. 
\end{thm}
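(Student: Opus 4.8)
The plan is to use the classical stochastic Lyapunov argument based on It\^o's formula and an optional-stopping estimate, in the spirit of Khasminskii. I would fix $\varepsilon \in (0,1)$ and $r > 0$. Since enlarging $r$ only makes the event $\{|X_t| < r\}$ easier to satisfy, there is no loss of generality in assuming $r$ is small enough that the closed ball $\overline{B}_r = \{x : |x| \le r\}$ is contained in $\mathcal{D}$. Because $V$ is positive definite and continuous and the sphere $\{|x| = r\}$ is compact, the constant $V_r := \inf_{|x| = r} V(x)$ is strictly positive. Using $V(0) = 0$ and continuity of $V$, I would then pick $\delta \in (0,r)$ small enough that $V(x_0) < \varepsilon\, V_r$ whenever $|x_0| < \delta$.

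Fixing such an $x_0$, the central object is the first exit time $\tau := \inf\{t \ge t_0 : |X_t(x_0)| \ge r\}$. Up to time $\tau$ the trajectory remains in $\overline{B}_r \subset \mathcal{D}$, so It\^o's formula \eqref{ito formula} applies and yields
\[
V(X_{t\wedge\tau}) = V(x_0) + \int_{t_0}^{t\wedge\tau} LV(X_s)\,ds + \int_{t_0}^{t\wedge\tau} V_x(X_s)\sigma(X_s)\,dB_s.
\]
The hypothesis $LV \le 0$ renders the drift term nonpositive, and the stopped stochastic integral is a genuine martingale because $V_x$ and $\sigma$ are continuous, hence bounded, on the compact ball $\overline{B}_r$. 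Taking expectations gives the supermartingale estimate $\mathbb{E}[V(X_{t\wedge\tau})] \le V(x_0)$.

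To conclude, I would bound the left-hand side from below on the event that the path has reached the sphere. On $\{\tau \le t\}$ continuity of the sample paths gives $|X_\tau| = r$, hence $V(X_{t\wedge\tau}) = V(X_\tau) \ge V_r$; restricting the expectation to this event yields $V_r\,\mathbb{P}\{\tau \le t\} \le \mathbb{E}[V(X_{t\wedge\tau})] \le V(x_0) < \varepsilon\, V_r$, so $\mathbb{P}\{\tau \le t\} < \varepsilon$ for every $t$. Letting $t \to \infty$ and invoking monotone convergence gives $\mathbb{P}\{\tau < \infty\} \le \varepsilon$. Since $\{\tau < \infty\}$ is exactly the event that $|X_t(x_0)| \ge r$ for some $t \ge t_0$, this is precisely $\mathbb{P}\{|X_t(x_0)| < r \text{ for all } t \ge t_0\} \ge 1 - \varepsilon$, establishing stochastic stability.

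The main obstacle is the martingale justification: a priori the It\^o integral is only a local martingale, so naively taking expectations would be unjustified. Stopping at the exit time $\tau$ is exactly what resolves this, confining the process to a compact subset of $\mathcal{D}$ on which the integrand $V_x\sigma$ is bounded and thereby upgrading the local martingale to a true martingale of zero mean. A secondary point to verify is that the solution is well defined up to $\tau$ and that the reduction to small $r$ (ensuring $\overline{B}_r \subset \mathcal{D}$) is legitimate; both follow from the local existence theory and from the observation that the stability condition only weakens as $r$ grows.
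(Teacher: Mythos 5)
Your proof is correct and is essentially the classical argument from the cited source (Mao, Theorem 4.2.2): the paper states this result without proof, citing the textbook, and your exit-time/stopped-supermartingale reasoning is exactly that standard proof, with the martingale and localization issues handled properly. It also mirrors the stopping-time technique the authors themselves use later in proving Theorem 4(i).
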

\begin{thm}{\cite[Theorem 2.3]{mao2007stochastic}}
\label{th:LV<0}
    If there exists a positive-definite function $V \in$ $C^{2}\left(\mathcal{D} ; \mathbb{R}_{+}\right)$ such that $L V(x) <0$ for all $x\in \mathcal{D}\setminus\{0\}$, where $\mathcal D$ is an open set containing the origin, then the trivial solution of \eqref{sde} is stochastically asymptotically stable.
\end{thm}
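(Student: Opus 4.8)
The plan is to separate the claim into its two constituent parts — stability in probability and almost-sure attraction — and to treat them in sequence. Since the hypothesis $LV(x)<0$ on $\mathcal{D}\setminus\{0\}$ implies in particular that $LV(x)\le 0$ on $\mathcal{D}$, the stochastic-stability part is immediate from Theorem~\ref{mao thm2.2}. Thus the only genuinely new content is the attractivity estimate: for every $\varepsilon\in(0,1)$ I must produce $\delta_0>0$ so that $\mathbb{P}\{\lim_{t\to\infty}X_t(x_0)=0\}\ge 1-\varepsilon$ whenever $|x_0|<\delta_0$.

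The workhorse is the supermartingale structure that It\^o's formula \eqref{ito formula} confers on $V(X_t)$. First I would fix $\varepsilon$, choose $r>0$ small enough that the closed ball $\overline{B}_r\subset\mathcal{D}$, and invoke the stability just established to obtain $\delta_0>0$ with $\mathbb{P}\{\,|X_t(x_0)|<r\text{ for all }t\ge t_0\,\}\ge 1-\varepsilon$ for $|x_0|<\delta_0$. Introducing the exit time $\tau_r=\inf\{t\ge t_0:|X_t(x_0)|\ge r\}$, I would apply It\^o's formula to the stopped process $V(X_{t\wedge\tau_r})$. Because $LV\le 0$ throughout $\mathcal{D}$, the stochastic integral is a local martingale while the drift contributes a nonincreasing term, so $V(X_{t\wedge\tau_r})$ is a nonnegative supermartingale; Doob's convergence theorem then guarantees that it converges almost surely to a finite limit $\xi\ge 0$ as $t\to\infty$.

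The crux — and the step I expect to be the main obstacle — is showing that $\xi$ vanishes almost surely on the event $\{\tau_r=\infty\}$, on which the trajectory never leaves $B_r$. I would argue by contradiction: if $\xi\ge 2\eta>0$ on a set of positive probability, then by continuity and positive-definiteness of $V$ the trajectory is eventually trapped in a compact annulus $\{x:\mu\le|x|\le r\}$ bounded away from the origin, where the strict inequality $LV<0$ together with compactness yields a uniform bound $LV(x)\le-\gamma<0$. Feeding this into Dynkin's formula,
\begin{equation*}
\mathbb{E}\big[V(X_{t\wedge\tau_r})\big]=V(x_0)+\mathbb{E}\Big[\int_{t_0}^{t\wedge\tau_r}LV(X_s)\,ds\Big],
\end{equation*}
the right-hand side would be driven to $-\infty$ by the accumulating negative drift over the (unbounded) time spent in the annulus, contradicting $\mathbb{E}\big[V(X_{t\wedge\tau_r})\big]\ge 0$.

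Hence $\xi=0$ almost surely on $\{\tau_r=\infty\}$, and since $V$ is positive-definite and continuous, $V(X_t)\to 0$ forces $X_t\to 0$ on that event. Consequently $\mathbb{P}\{\lim_{t\to\infty}X_t(x_0)=0\}\ge\mathbb{P}\{\tau_r=\infty\}\ge 1-\varepsilon$, which, combined with the stochastic stability already in hand, yields stochastic asymptotic stability. The delicate points to get right are the localization via $\tau_r$ (ensuring that It\^o's formula and the supermartingale property are invoked only where the $C^2$ function $V$ is controlled) and making the \emph{trapped-in-an-annulus} step rigorous, e.g.\ by introducing a second stopping time recording entry into a neighborhood of the origin and carefully tracking the probability mass on which the trajectory persists in the annulus.
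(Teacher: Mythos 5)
The paper does not actually prove this statement: it is quoted as a preliminary from \cite[Theorem 2.3]{mao2007stochastic}, so there is no in-paper argument to match yours against. Judged on its own, your proof is correct. Reducing stochastic stability to Theorem~\ref{mao thm2.2} is right, and the attractivity half is sound: the stopped process $V(X_{t\wedge\tau_r})$ is a nonnegative supermartingale (note the stochastic integral stopped at $\tau_r$ is a genuine martingale because $V_x\sigma$ is bounded on $\overline{\mathcal{B}}_r(0)$, which is also what licenses Dynkin's formula), Doob gives an a.s.\ limit $\xi\ge 0$, and the occupation-time contradiction works because on a positive-probability event where $\xi\ge 2\eta$ and $\tau_r=\infty$, positive definiteness plus compactness force $|X_t|\ge\mu$ eventually, $LV\le-\gamma$ on the annulus, and since the drift integral is nonpositive everywhere up to $\tau_r$ its expectation is eventually driven below $-V(x_0)$, contradicting $\mathbb{E}[V(X_{t\wedge\tau_r})]\ge 0$; the final implication $V(X_t)\to 0\Rightarrow X_t\to 0$ again uses $\inf\{V(x):\mu\le|x|\le r\}>0$. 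Your route does differ from the standard one in Mao's book --- which is also the technique this paper itself deploys in the proofs of Theorem~\ref{th: LV} ii) and iii): there one avoids the martingale convergence theorem entirely by introducing nested stopping times (first entry into a small ball $\mathcal{B}_\eta(0)$ and first exit from a larger set), deriving $\mathbb{P}[\tau^*\wedge\tau<\infty]=1$ directly from the uniform drift bound $\mathbb{E}[\tau^*\wedge\tau\wedge t-t_0]\le V(x_0)/\xi$, and then restarting at the entry point via stochastic stability and the strong Markov property. The stopping-time route is more elementary and extends naturally to the quantitative $1-V(x_0)/c_2$ estimates the paper needs later; your supermartingale route is shorter and dispenses with the Markov restart, at the price of the ``trapped in an annulus'' step, which you correctly flagged as the crux and which does hold up once the uniform lower bound on $V$ away from the origin is made explicit.
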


\subsection{Zubov's Equation and Region of Attraction}
\label{sec:zubov}

\begin{dfn}[Sublevel-$c$ Set]
    Given $V:\mathbb{R}^n\rightarrow \mathbb{R}$, for any $c\in \mathbb{R}$, a set of the form 
    $$V^{c}=\{x \in \mathbb{R}^n \mid V(x) \leq c\} $$ 
    is called a sublevel-$c$ set of $V$.
\end{dfn}
\begin{dfn}[$p$-Region of Attraction]
For any probability level \(p \in [0,1]\), we define the $p$-region of attraction for the origin:
\[
p\text{-}\mathrm{ROA} := \bigl\{x_0 \in \mathbb{R}^{n} : \mathbb{P}\bigl[\lim_{t\to\infty} |X_t(x_0)| = 0\bigr] \geq p\bigr\},
\]
which is the largest probabilistic region of attraction at level \(p\), consisting of all \(x_0\) for which the solution $X_t(x_0)$ converges to the origin with probability at least  \(p\).
\end{dfn}

\begin{dfn}
Denote by $\mathcal{C}$ the set of points that are attracted to the equilibrium with positive probability, i.e.,
$$
\mathcal{C}=\left\{x_0 \in \mathbb{R}^n: \mathbb{P}\bigl[\lim_{t\to\infty} |X_t(x_0)| = 0\bigr] >0\right\}.
$$
\end{dfn}

Similar to how Zubov's equation characterizes the domain of attraction for deterministic systems, Zubov's equation can be used to characterizes probabilistic domains of attraction for stochastic systems \cite{camilli2001generalization,camilli2005zubov,camilli2000zubov}. We present the most relevant part as a preliminary to our work below. 

Introduce a value function $W: \mathbb{R}^n \rightarrow \mathbb{R}$ defined as
\begin{equation}
    \label{eq: W}
    W(x)=1-\mathbb{E}[\exp{(\int_{t_0}^{\infty}- g(X_s(x))ds)}]
\end{equation}
where $g: \mathbb{R}^n \rightarrow \mathbb{R}$ is any Lipschitz continuous function that is positive definite with respect to the origin. Under suitable assumptions \cite[Theorem 2.2]{camilli2002characterizing}, it can be shown that $W$ is the unique solution (in viscosity sense) to the PDE 
\begin{equation}
\label{eq: stochastic zubov}
\begin{cases}
    L W(x) = -g(x)(1 - W(x)),\\ 
    W(0)=0.
\end{cases}
\end{equation}
We refer to this PDE as the stochastic Zubov equation for (\ref{sde}). Furthermore, $\mathcal{C}$ is completely characterized by the strict sublevel-$1$ set of $W$, i.e.,
$$
\mathcal{C}=\{x \in \mathbb{R}^n: W(x)<1\}.
$$
Since \(W\) satisfies $LW(x)=-g(x)(1-W(x))$ and \(g\) is positive definite, \(W(x) < 1\) if and only if \(LW(x) < 0\) when $x\neq 0$. Furthermore, $W(0)=0$ and $LW(0)=0$. Therefore, the sets $\{x \in \mathbb{R}^n : W(x) < 1\}$ and $\{x \in \mathbb{R}^n : LW(x) < 0\}$ are equivalent, provided that $W$ is positive definite. Consequently, by Theorem \ref{th:LV<0}, accurately solving the stochastic Zubov PDE \eqref{eq: stochastic zubov} can lead to a valid Lyapunov function for (\ref{sde}) and an approximation of the probabilistic domain of attraction  $\mathcal{C}$. We will also provide sufficient conditions for verifying $p$-regions of attraction in Section \ref{sec:verify}.

\section{Training Neural Lyapunov Functions via PINNs}

Following \cite{liu2025physics}, we approximate the unique viscosity solution $W$ to the stochastic Zubov equation (\ref{eq: stochastic zubov}) with a neural network \(W_{\mathrm{NN}}(x;\theta)\) through a physics-informed neural network (PINN) framework.

Let \(W_{\mathrm{NN}}(x;\theta)\) denote a neural network approximation for solving the stochastic Zubov PDE \eqref{eq: stochastic zubov}. The loss function consists of three terms:
\[
\mathcal{L}(\theta)=L_{r}(\theta)+L_{b}(\theta)+L_{d}(\theta),
\]
% \[
% \mathcal{L}(\theta)=L_{r}(\theta)+L_{b}(\theta)+L_{d}(\theta),
% \]
which are defined as follows:
\begin{enumerate}
    \item Residual loss \(L_{r}\): This term measures the error in satisfying the PDE over a set of collocation points \(S=\{x_i\}_{i=1}^{N} \subseteq X\) (with \(X\) being a compact training domain). For example, one may define
\[
\begin{aligned}
L_{r}(\theta) &= \frac{1}{N}\sum_{i=1}^{N}\big(\nabla_{x} W_{\mathrm{NN}}(x_i;\theta)f(x_i) + \\ & \quad\quad  \frac{1}{2}\operatorname{Tr}[\sigma^T(x_i)\,\frac{\partial^2 W_{\mathrm{NN}}}{\partial x^2}(x_i)\,\sigma(x_i)] \\&
\quad\quad + g(x_i)(1-W_{\mathrm{NN}}(x_i;\theta))\big)^2.
\end{aligned}
\]
  \item  Boundary loss \(L_{b}\): This term enforces the boundary conditions. We require that \(W(x)=0\) for \(x =0\). The boundary loss is defined as
  $$L_{b}(\theta)=W_{\mathrm{NN}}(0;\theta)^2.$$
  % Boundary loss \(L_{b}\): This term enforces the boundary conditions. We require that \(W(x)=1\) for \(x \notin \mathcal{C}\) and \(W(x)=0\) for \(x =0\). Even if the boundaries are not precisely known, an over-approximation \(U\) of \(\mathcal{C}\) can be used. In that case, the loss is defined as
  % \[
  % L_{b}(\theta)=\frac{1}{N_{b}} \sum_{x_i \in  (X\setminus U)}\left(W_{\mathrm{NN}}(x_i;\theta)-1\right)^2,
  % \]
  % where \(N_b\) is the number of points in \(X\setminus U\).
    \item Data loss \(L_{d}\): By an optional set of data points \(S_{d}=\left\{y_{i}\right\}_{i}^{N_{d}}\) on which approximate values \(\hat{W}(y_{i})\) for \(W(y_{i})\) can be obtained. According to the analysis in Section \ref{sec:zubov}, \(W\left(y_{i}\right)\) can be approximated by simulating \(X_t(y_{i})\) for a sufficiently long period of time and use \eqref{eq: W} to compute $\hat{W}\left(y_{i}\right)$. Based on the values of \(\left\{\hat{W}\left(y_{i}\right)\right\}_{i}^{N_{d}}\), the data loss \(L_{d}\) can be defined as
\[
L_{d}=\frac{1}{N_{d}} \sum_{i=1}^{N_{d}}(W_{\mathrm{NN}}\left(y_{i} ; \theta\right)-\hat{W}\left(y_{i}\right))^{2}.
\] 
This term incorporates additional data from stochastic simulations to further improve accuracy.
\end{enumerate}

\section{Formal Verification of the Learned Neural Lyapunov Function}\label{sec:verify}

Due to potential approximation errors and the lack of formal guarantees in neural network approximations, as highlighted in \cite{liu2023towards,liu2025physics,zhou2022neural}, we employ SMT solvers for formal verification. In the following sections, we provide sufficient stochastic Lyapunov conditions for probabilistic attraction, which are the main contribution of this paper. 

\subsection{Verification of Local Stability}

In this section, we assume that Assumption 1 holds. Rewrite equation \eqref{sde} as
\begin{equation}
    \label{eq: linearized sde}
    dX_t = \left( A X_t + m(X_t) \right) dt + \left( C X_t + n(X_t) \right) dB_t,
\end{equation}
where $A = Df(0) \in \mathbb{R}^{n \times n},$
and for $k = 1, \ldots, m$, $S_k = D\sigma_k(0) \in \mathbb{R}^{n \times n},$
with \(\sigma_k(x)\) denoting the \(k\)-th column of \(\sigma(x)\). The matrix \(C\) is given by
$C = [ S_1, S_2, \ldots, S_m ].$
Here, the remaining terms are defined as
$m(x) = f(x) - A x \quad \text{and} \quad n(x) = \sigma(x) - C x,$
which satisfy
$\lim_{x \to 0} \frac{\|m(x)\|}{\|x\|} = 0 \quad \text{and} \quad \lim_{x \to 0} \frac{\|n(x)\|}{\|x\|} = 0.$ Here, $\|\cdot\|$ denotes the matrix 2-norm. When it applies to vectors, $\|\cdot\|$ coincides with vector 2-norm $|\cdot|$ (Euclidean norm).

In \cite[Theorem 3.4]{bjornsson2018local}, local Lyapunov functions for stochastic systems were studied and a corresponding theorem was established. In this paper, we offer a simpler proof under suitable assumptions.
\begin{prop}
\label{prop: LV_p}
Let \(A\) be a Hurwitz matrix. For any symmetric positive definite matrix \(Q \in \mathbb{R}^{n \times n}\), suppose there exists a symmetric positive definite matrix \(P \in \mathbb{R}^{n \times n}\) satisfying the stochastic Lyapunov equation
\begin{equation}
\label{eq:stochastic LF condition}
P A + A^T P + \sum_{i=1}^m S_i^T P S_i = -Q.
\end{equation}
Define
$
V_P(x) = x^T P x.
$
Denote \(LV_P(x)\) by \(h(x)\) and define
\[
M(x) = D^2 h(x) + 2Q,
\]
with \(D^2 h(x)\) being the Hessian matrix of \(h(x)\). For some sufficiently small \(\varepsilon > 0,\) set 
$
r = \lambda_{\min}(Q) - \varepsilon > 0.
$
If there exists a constant \(c>0\) such that
\begin{equation}
\label{local condition}
x\in V_P^c = \{ x \in \mathbb{R}^n \mid V_P(x) \le c \} \Rightarrow \| M(x) \|_F \le 2 r,
\end{equation}
then
\[
LV_P(x) < 0, \quad \forall x \in V_P^c\setminus\{0\}.
\]
\end{prop}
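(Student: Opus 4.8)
The plan is to establish strict negativity of $LV_P = h$ by writing $h$ through the integral form of its second-order Taylor remainder along rays from the origin, and then comparing the Hessian of $h$ against $-2Q$, which the hypothesis \eqref{local condition} controls in Frobenius norm.

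First I would compute, from $V_{P,x}=2x^TP$ and $V_{P,xx}=2P$,
\[
h(x)=LV_P(x)=2x^TPf(x)+\operatorname{Tr}[\sigma^T(x)P\sigma(x)].
\]
Assumption 1 ($f(0)=0,\ \sigma(0)=0$) immediately gives $h(0)=0$, and differentiating each summand produces factors $f(0)$ or $\sigma_k(0)$ at the origin, so $\nabla h(0)=0$ as well; since $f,\sigma\in C^2$ we also have $h\in C^2$. For motivation I would note that substituting $f(x)=Ax+m(x)$ and $\sigma_k(x)=S_kx+n_k(x)$ and invoking the Lyapunov equation \eqref{eq:stochastic LF condition} shows the quadratic part of $h$ at the origin is $-x^TQx$, i.e. $D^2h(0)=-2Q$, so that $M(0)=0$ and the hypothesis \eqref{local condition} is nonvacuous for small $c$.

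Next, because $h\in C^2$ with $h(0)=0$ and $\nabla h(0)=0$, Taylor's theorem with integral remainder applied to $t\mapsto h(tx)$ gives
\[
h(x)=\int_0^1 (1-t)\,x^TD^2h(tx)\,x\,dt.
\]
Substituting $D^2h=M-2Q$ and using $\int_0^1(1-t)\,dt=\tfrac12$ yields
\[
h(x)=\int_0^1(1-t)\,x^TM(tx)\,x\,dt-x^TQx.
\]
The geometric point is that $V_P^c$ is star-shaped about the origin: for $t\in[0,1]$, $V_P(tx)=t^2V_P(x)\le V_P(x)\le c$, so $tx\in V_P^c$ and hypothesis \eqref{local condition} gives $\|M(tx)\|_F\le 2r$. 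Hence $|x^TM(tx)x|\le\|M(tx)\|_F|x|^2\le 2r|x|^2$ (using the spectral-versus-Frobenius bound $\|\cdot\|_2\le\|\cdot\|_F$), and integrating bounds the first term by $r|x|^2$. Combining this with $x^TQx\ge\lambda_{\min}(Q)|x|^2$ and $r=\lambda_{\min}(Q)-\varepsilon$ gives, for $x\ne 0$,
\[
h(x)\le r|x|^2-\lambda_{\min}(Q)|x|^2=-\varepsilon|x|^2<0,
\]
which is the claim.

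I expect the main obstacle to be the opening step rather than the estimates: one must verify carefully that $h(0)=0$ and $\nabla h(0)=0$ so that the remainder reduces to the purely second-order integral above — this is exactly where Assumption 1 enters. After that the argument is bookkeeping, but the tuned constants (the factor $2$ in $M=D^2h+2Q$, the threshold $2r$, and $r=\lambda_{\min}(Q)-\varepsilon$) must be tracked precisely so that the $\tfrac12$ from $\int_0^1(1-t)\,dt$ and the Frobenius-norm bound combine to leave exactly $-\varepsilon|x|^2$.
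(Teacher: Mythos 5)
Your proposal is correct and follows essentially the same route as the paper: both arguments verify $h(0)=0$ and $\nabla h(0)=0$ from Assumption 1, apply Taylor's theorem with integral remainder to write $h(x)=-x^TQx+x^T\bigl[\int_0^1 M(tx)(1-t)\,dt\bigr]x$, exploit the star-shapedness of $V_P^c$ so that hypothesis \eqref{local condition} applies to $M(tx)$, and combine the Frobenius bound with $\int_0^1(1-t)\,dt=\tfrac12$ to conclude $h(x)\le-\varepsilon|x|^2$. The only cosmetic difference is that the paper first expands $LV_P$ explicitly via the Lyapunov equation to exhibit the quadratic part $-x^TQx$ before invoking Taylor, whereas you obtain the same split by substituting $D^2h=M-2Q$ inside the remainder integral.
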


\begin{proof}
We begin by computing the generator \(LV_P(x)\) for the function \(V_P(x)=x^T P x\). By definition, using It\^{o}'s formula, we have
\begin{equation}
\label{trace term}
\begin{aligned}
LV_P(x) =& \frac{dV_P}{dx} f(x) + \frac{1}{2} \operatorname{Tr}[ \sigma(x)^T V_{xx}(x) \sigma(x)]\\
% =& \Bigl(2x^T P\Bigr) \bigl( Ax + m(x) \bigr) +\\ & \frac{1}{2}\operatorname{Tr}\Bigl( (Cx+n(x))^T (2P) (Cx+n(x))\Bigr)\\
=& 2x^T P A x + 2x^T P m(x) + \\ & \operatorname{Tr}[(Cx+n(x))^T P (Cx+n(x))].
\end{aligned}
\end{equation}
Expanding the trace term\footnote{We have \( C^T P C = [(S_i x)^T P(S_j x)]_{i,j=1}^m,\)    \(\operatorname{Tr}[C^T P C] = \sum_{i=1}^m x^T S_i^T P S_i x. \)} in \eqref{trace term}
gives
\[
 \sum_{i=1}^m x^T S_i^T P S_i x + 2\operatorname{Tr}[n(x)^T P C x] + \operatorname{Tr}[n(x)^T P n(x)].
\]
Thus, we can write
\[
\begin{aligned}
LV_P(x) =\, & x^T(PA + A^T P + \sum_{i=1}^m  S_i^T P S_i )x + 2 x^T P m(x)\\
& + \operatorname{Tr}[2 n(x)^T P C x + n(x)^T P n(x)].
\end{aligned}
\]
By \eqref{eq:stochastic LF condition}, it follows that
\[
\begin{aligned}
LV_P(x) &= -x^T Q x + 2x^T P m(x) + \operatorname{Tr}[2 n(x)^T P C x \\
         &\quad + n(x)^T P n(x)].
\end{aligned}
\]
Since \(\sigma(0)=n(0)=0\), we have \(LV_P(0)=0\). Moreover, since both \(m(x)\) and \(n(x)\) vanish faster than \(x\) as \(x\) approaches zero, the dominant term in the expansion of \(LV_P(x)\) is quadratic with a coefficient of \(-Q\).
Define \(h(x)=LV_P(x)\). Then \(h(0)=0\), \(\nabla h(0)=0\) and the Hessian at \(x=0\) satisfies
$D^2 h(0) = -2Q.$

By Taylor's theorem in several variables, we can express
\[
\begin{aligned}
h(x) &= h(0) + \nabla h(0)\cdot x + x^T[\int_0^1 D^2h(tx)\,(1-t)\,dt]x\\
     &= -x^T Q x + R(x),
\end{aligned}
\]
where the remainder \(R(x)\) can be written as
\[
R(x) = x^T [\int_0^1 ( D^2 h(tx) + 2Q )(1-t)\,dt] x.
\]
Define
$$
M(x) = D^2 h(x) + 2Q.
$$
Then,
\[
h(x) = -x^T Q x + x^T [\int_0^1 M(tx)(1-t)\,dt] x.
\]
Assume that, for all \(x \in V_P^c\), \eqref{local condition} holds.
Then, by estimating the remainder term, we have
\[
\begin{aligned}
h(x) &\le -\lambda_{\min}(Q)\|x\|^2 + \|x\|^2 \int_0^1 \| M(tx) \|_F (1-t) dt\\
&\le -\lambda_{\min}(Q)\|x\|^2 + 2r \|x\|^2 \int_0^1 (1-t) dt\\
&= (r - \lambda_{\min}(Q))\|x\|^2\\
& \leq -\varepsilon \|x\|^2,  \quad \quad \quad \quad \quad \quad \quad  \quad \quad \quad \forall x \in V_P^c.
\end{aligned}
\]
Thus, \(LV_P(x) = h(x) < 0\) for all \(x \in V_P^c \setminus\{0\}\).
\end{proof}

\begin{rem}
For any matrix \(A \in \mathbb{R}^{n \times n}\), we have
\[
\|A\| \le \|A\|_F \le \sqrt{n}\|A\|,
\]
which shows that the 2-norm and Frobenius norm are equivalent. In practice, we use the Frobenius norm due to its computational simplicity, particularly in the context of formal verification with SMT solvers. 

Since \(V_P^c\) is a convex set containing 0, the implication
\[
x^T P x \le c \implies \|M(tx)\|_F \le 2r,\quad \forall\, t\in [0,1],
\]
is equivalent to (\ref{local condition}). 
For any given $r>0$, condition \eqref{local condition} can be verified by dReal \cite{gao2013dreal}. Since \(\|M(0)\|_F = 0< 2r\) and $P$ is positive definite, one can always choose \(c>0\) sufficiently small so that \eqref{local condition} holds. In rare cases, if \eqref{local condition} holds for all \(x\in\mathbb{R}^n\), then the solution of \eqref{sde} is stochastically asymptotically stable in the large.  
\end{rem}

\subsection{Verification of Probabilistic Regions of Attraction}

In \cite[Theorem 4.4]{meng2024stochastic}, reach-avoid-stay and safe-stability for stochastic systems are established using a single Lyapunov function.  Here, we reformulate the probabilistic safe stability by employing the concept of sublevel sets. This reformulation provides a precise definition of probabilistic attraction and yields a straightforward, verifiable condition for ensuring probabilistic safe stability.

\begin{thm}
\label{th: LV}
    Let Assumption $1$ hold and $\mathcal{D}\subseteq \mathbb{R}^n$. Consider a function $V \in C^2\left(\mathbb{R}^n;\mathbb{R}_{\geq 0}\right)$. Let $c_2>0$ be such that $V^{c_2}\subset \mathcal{D}$. The following holds: 
    \begin{enumerate}[label=\roman*)]
        \item  If $L V(x) \leq 0$ for all $ x \in V^{c_2}$, then for any $x_0 \in V^{c_2}$, we have
        $$
        \mathbb{P}\left\{X_t(x_0) \in V^{c_2}, \,\, \forall t \geq t_0\right\} \geq 1 - \frac{V(x_0)}{c_2}.
        $$
        
        % In other words, for any initial condition $x_0$ within the sublevel-$c_2$ set, the probability that the solution $X_t(x_0)$ will stay within this set is greater than $1-\frac{V(x_0)}{c_2}$.      
        
        % \item If there exists a neighborhood \( \Lambda \subseteq V^{c_2} \) such that \( LV(x) \le 0 \) for all \( x \in V^{c_2} \setminus \Lambda \).
        % Then, for any \( x_0 \in V^{c_2} \setminus \Lambda \), we have
        % \[
        % \mathbb{P}\left\{X_t(x_0) \in V^{c_2}, \, \forall t \geq t_0\right\} \geq 1 - \frac{V(x_0)}{c_2}.
        % \]
        % still holds. 
        
        % That is, even if \(LV \le 0 \) holds outside a small neighborhood \(\Lambda\) of the origin, the same probability bound for staying in \(V^{c_2}\) remains valid for $x_0$ in $V^{c_2} \setminus \Lambda$.
        
        \item 
        If $L V(x) < 0$ for all $ x \in V^{c_2} \setminus \{0\}$  and $V$ is positive definite on $\mathcal D$, then, for any $x_0 \in V^{c_2}$, we have
        {\small
        $$
        \mathbb{P}\{\lim _{t \rightarrow \infty} X_t(x_0) = 0 ,\, X_t(x_0) \in V^{c_2}, \forall t \geq t_0\} \geq 1-\frac{V(x_0)}{c_2}.
        $$}

        % In other words, for any initial condition $x_0$ within the sublevel-$c_2$ set, the probability that the solution $X_t(x_0)$ remains in this set and converges to the equilibrium point is greater than $1-\frac{V(x_0)}{c_2}$.
        \item  If there exists $0<c_1<c_2$ and $\zeta>0$ such that \( LV(x) \le -\zeta \) for all \( x \in V^{c_2} \setminus V^{c_1} \).
        Then, for any \( x_0 \in V^{c_2} \setminus V^{c_1}  \), 
        {\small\[
        \begin{aligned}
        &\mathbb{P}\{X_t(x_0) \in V^{c_2}, \, \forall t \geq t_0 \text{ and } \exists \, T\ge t_0\, 
        \text{s.t.}\,  X_T(x_0) \in V^{c_1}\} \\
        &\geq 1 - \frac{V(x_0)}{c_2}.
        \end{aligned}
        \]}
        % still holds. That is, even if \(LV \leq 0 \) holds outside a small neighborhood \(\Omega\) of the origin, the same probability bound for staying in \(V^{c_1}\) remains valid for $x_0$ in $V^{c_2} \setminus \Omega$.
    \end{enumerate}
        
\end{thm}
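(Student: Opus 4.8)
The plan is to prove all three parts with a single engine: It\^o's formula \eqref{ito formula} applied to $V(X_t)$, combined with the optional-stopping/supermartingale inequality for the process stopped at a suitable exit time. The recurring technical point, which I would settle once at the outset, is that since $V^{c_2}$ is a compact (closed and bounded) sublevel set contained in $\mathcal{D}$ and $V\in C^2$, the integrand $V_x(X_s)\sigma(X_s)$ stays bounded along any trajectory stopped upon leaving $V^{c_2}$; hence the stochastic integral in \eqref{ito formula} is a genuine mean-zero martingale, not merely a local one, and expectations may be taken term by term. For part i) I would set $\tau=\inf\{t\ge t_0:X_t\notin V^{c_2}\}$. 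For $s<\tau$ the path lies in $V^{c_2}$, where $LV\le 0$, so $V(X_{t\wedge\tau})$ is a nonnegative supermartingale and $\mathbb{E}[V(X_{t\wedge\tau})]\le V(x_0)$. Sample-path continuity forces $V(X_\tau)=c_2$ on $\{\tau\le t\}$, whence $c_2\,\mathbb{P}(\tau\le t)\le\mathbb{E}[V(X_{t\wedge\tau})]\le V(x_0)$; letting $t\to\infty$ gives $\mathbb{P}(\tau<\infty)\le V(x_0)/c_2$, equivalently $\mathbb{P}\{X_t\in V^{c_2}\ \forall t\ge t_0\}\ge 1-V(x_0)/c_2$.

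For part ii) I would build on part i). On the staying event $\{\tau=\infty\}$ the nonnegative supermartingale $V(X_t)$ converges almost surely to a limit $V_\infty\ge 0$. Taking expectations in the It\^o decomposition and rearranging yields $\mathbb{E}\big[\int_{t_0}^\infty (-LV(X_s))\,\mathbf{1}_{\{s<\tau\}}\,ds\big]\le V(x_0)<\infty$, so this time integral is finite almost surely. I would then argue by contradiction: if $V_\infty>0$ with positive probability on $\{\tau=\infty\}$, then on the subevent $\{V_\infty\ge 1/k\}$ for some integer $k$ the trajectory eventually lies in the compact annulus $\{1/(2k)\le V\le c_2\}$, which excludes the origin; there $-LV$ attains a strictly positive minimum, forcing the integral of $-LV$ to diverge, contradicting its finiteness. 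Hence $V_\infty=0$ a.s.\ on $\{\tau=\infty\}$, and positive definiteness of $V$ upgrades $V(X_t)\to 0$ to $X_t\to 0$. Combined with the probability bound from part i), this gives the stated estimate.

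For part iii) I would introduce two stopping times, $\tau_1=\inf\{t\ge t_0:X_t\in V^{c_1}\}$ and $\tau_2=\inf\{t\ge t_0:X_t\notin V^{c_2}\}$, and set $\tau=\tau_1\wedge\tau_2$. For $s<\tau$ the path lies in the annulus $V^{c_2}\setminus V^{c_1}$ where $LV\le-\zeta$. The strict drift bound does double duty: taking expectations gives $\zeta\,\mathbb{E}[(t\wedge\tau)-t_0]\le V(x_0)$, so $\mathbb{E}[\tau-t_0]\le V(x_0)/\zeta<\infty$ and thus $\tau<\infty$ a.s.\ (the process leaves the annulus in finite time), while the same decomposition gives the supermartingale inequality $\mathbb{E}[V(X_\tau)]\le V(x_0)$. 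By path continuity $V(X_\tau)=c_2$ on the \emph{bad} event $\{\tau_2<\tau_1\}$ and $V(X_\tau)=c_1$ on the \emph{good} event $\{\tau_1\le\tau_2\}$, so nonnegativity of $V$ gives $c_2\,\mathbb{P}(\tau_2<\tau_1)\le V(x_0)$, i.e.\ $\mathbb{P}(\tau_1\le\tau_2)\ge 1-V(x_0)/c_2$. This is precisely the event that the trajectory stays in $V^{c_2}$ up to the first time $T=\tau_1$ at which it enters $V^{c_1}$.

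The step I expect to require the most care is the passage from the formal supermartingale manipulations to rigorous expectation identities: verifying that the stopped stochastic integral is a true martingale (handled by stopping at the boundary of the compact set $V^{c_2}$, where $V_x\sigma$ is bounded), and that continuity genuinely pins the boundary value $V(X_\tau)$ to $c_2$ or $c_1$. For part ii) the delicate point is the contradiction excluding $V_\infty>0$, which relies on compactness of the annulus and a uniform positive lower bound on $-LV$ there. For part iii) I would also be careful to phrase the target event relative to the first hitting time $T=\tau_1$, since the annulus estimate controls the trajectory only up to first passage; the supermartingale bound governs exactly this reach-before-exit probability.
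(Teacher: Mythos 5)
Parts i) and ii) of your proposal are sound. Part i) is essentially identical to the paper's argument (stopped It\^o formula, supermartingale bound, boundary value pinned to $c_2$ by path continuity). Part ii) takes a genuinely different route: the paper invokes stochastic stability (Theorem \ref{mao thm2.2}) to produce, for each small radius $r$, an $\eta$-ball from which trajectories remain in $\mathcal{B}_r(0)$ with probability at least $1-\varepsilon$, and then uses a $\xi$-drift bound on $V^{c_2}\setminus\mathcal{B}_\eta(0)$ to show the process hits $\mathcal{B}_\eta(0)$ almost surely on the staying event; you instead use supermartingale convergence of $V(X_{t\wedge\tau})$ together with finiteness of $\mathbb{E}\int_{t_0}^{\tau}(-LV(X_s))\,ds$ and a compactness contradiction to force $V(X_t)\to 0$. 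Both are standard; yours avoids the $\varepsilon$--$r$ bookkeeping but, like the paper's, silently requires $V^{c_2}$ to be compact (the hypotheses only give $V^{c_2}\subset\mathcal{D}$, which does not by itself imply boundedness) so that $-LV$ attains a positive minimum on the annulus $\{1/(2k)\le V\le c_2\}$ and so that $V(X_t)\to 0$ upgrades to $X_t\to 0$. The paper makes the same implicit assumption when it maximizes $LV$ over $V^{c_2}\setminus\mathcal{B}_\eta(0)$, so this is not a defect relative to the paper, but it deserves an explicit statement.

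Part iii) is where your proof and the stated claim diverge. You establish $\mathbb{P}(\tau_1\le\tau_2)\ge 1-V(x_0)/c_2$, i.e.\ the trajectory reaches $V^{c_1}$ \emph{before} exiting $V^{c_2}$. The event in the theorem is $\{X_t\in V^{c_2}\ \forall t\ge t_0\}\cap\{\exists T:\ X_T\in V^{c_1}\}$, which additionally requires the trajectory to remain in $V^{c_2}$ \emph{after} it has entered $V^{c_1}$; your event $\{\tau_1\le\tau_2\}$ strictly contains it, since a path may hit $V^{c_1}$ first and later leave $V^{c_2}$. You flag this yourself (``the annulus estimate controls the trajectory only up to first passage''), but the flag does not close the gap: the hypothesis $LV\le-\zeta$ holds only on $V^{c_2}\setminus V^{c_1}$ and says nothing about $LV$ inside $V^{c_1}$, so the post-entry behavior genuinely cannot be controlled from iii)'s assumptions alone. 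To be fair, the paper's own proof of this point is also thin --- it shows $\sigma<\infty$ a.s.\ on $\{\tau=\infty\}$ and then asserts $\mathbb{P}\{\tau=\infty\}\ge 1-V(x_0)/c_2$ ``with a slightly modified argument as in part ii),'' which really needs $LV\le 0$ on all of $V^{c_2}$ as in part i). So your version is the honestly provable one under the stated hypotheses, but as written it proves a weaker statement than the theorem asserts; you should either restate iii) in the reach-before-exit form you actually prove, or add the hypothesis needed to control the trajectory after it enters $V^{c_1}$.
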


\begin{proof}
 \romannumeral 1)
For any initial condition $x_0 \in V^{c_2}$, define $\tau$ be the first exit time of $X_t(x_0)$ from $V^{c_2}$, that is 
$$
\tau=\inf \left\{t \geq t_0 : X_t(x_0) \notin V^{c_2} \right\}.
$$ 
If $X_t(x_0) \in V^{c_2}$ for all $t \geq t_0$, then $\tau=\inf \emptyset=\infty$. 
In the following proof, we simplify the notation by writing \(X_s(x_0)\) as \(X_s\).
By It\^{o}'s formula, 
\[
\begin{aligned}
V(X_{\tau \wedge t}(x_0)) &= V(x_0) + \int_{t_0}^{\tau \wedge t} L V(X_s) \, ds \\
&+ \int_{t_0}^{\tau \wedge t} V_x(X_s) g(X_s) \, dB(s), \quad \forall t \ge t_0.
\end{aligned}
\]
For any fixed $t \geq t_0$, taking expectation on both sides and making use of $L V(X_t(x_0)) \leq 0$ for all $ X_t(x_0) \in V^{c_2} \subseteq \mathcal{D}$, we can get
\begin{equation}
    \label{eq1: p-roa}
    \mathbb{E} V(X_{\tau \wedge t}(x_0))=V\left(x_0\right)+\mathbb{E} \int_{t_0}^{\tau \wedge t} L V(X_s) d s \leq V\left(x_0\right).
\end{equation}
Considering $\mathbb{E} V(X_{\tau \wedge t}) = \mathbb{E}[(I_{\{\tau \leq t\}} +I_{\{\tau > t\}})V(X_{\tau \wedge t})]$ and $V(X_t) \geq 0$ for all $X_t \in \mathcal{D}$, we can get
\begin{equation}
    \label{eq2: p-roa}
    \mathbb{E}[I_{\{\tau \leq t\}} V(X_{\tau \wedge t})] \leq \mathbb{E} V(X_{\tau \wedge t}).
\end{equation}
By the definitions of $V^{c_2}$ and $\tau$ and continuity of $V(x)$ and $X_t$, if $\tau \leq t$, we know that $V(X_{\tau \wedge t}(x_0)) = V(X_{\tau}(x_0)) = c_2$. Then we can get 
\begin{equation}
    \label{eq3: p-roa}
    \mathbb{P}\left\{\tau \leq t\right\} c_2 = \mathbb{E}[I_{\{\tau \leq t\}} V(X_{\tau \wedge t}(x_0))].
\end{equation}
By \eqref{eq1: p-roa}, \eqref{eq2: p-roa} and \eqref{eq3: p-roa}, we can get
$$ \mathbb{P}\left\{\tau \leq t\right\} c_2= \mathbb{E}[I_{\{\tau \leq t\}} V(X_{\tau \wedge t})] \leq \mathbb{E} V(X_{\tau \wedge t}) \leq V\left(x_0\right). $$
Therefore, $ \mathbb{P}\{\tau \leq t\} \leq \frac{V\left(x_0\right)}{c_2}.$ 
By letting $t \rightarrow \infty$, we obtain $\mathbb{P}\{\tau<\infty\} \leq \frac{V\left(x_0\right)}{c_2}$. It follows that, for all $x_0 \in V^{c_2}$, 
\begin{equation}
\label{eq4: p-roa}
\mathbb{P}\{\tau=\infty \} 
= 1-\mathbb{P}\{\tau<\infty\} 
\geq  1-\frac{V\left(x_0\right)}{c_2}.
\end{equation}
Since $\tau = \infty$ is equivalent to $X_t(x_0) \in V^{c_2}$ for all $t \geq t_0$. We can rewrite \eqref{eq4: p-roa} as
\begin{equation}
\label{eq5: p-roa}
\mathbb{P}\left\{X_t(x_0) \in V^{c_2}, \forall t \geq t_0\right\} \geq 1 - \frac{V\left(x_0\right)}{c_2}.
\end{equation}

\romannumeral 2)
Fix an arbitrary $r \geq 0$ such that $\mathcal{B}_{r}(0) \subseteq V^{c_2}$. Note that $V$ satisfies the conditions in Theorem \ref{mao thm2.2}. By the definition of stochastic stability, it follows that there exists $\eta \in\left(0, r\right)$ such that for any $\varepsilon \in(0,1)$, $x(\tau^*,x_0) \in \overline{\mathcal{B}}_\eta(0)$ implies
$\mathbb{P}\{X_t(x_0) \in \mathcal{B}_{r}(0), \, \forall t \geq \tau^*\} \geq 1-\varepsilon$. 
Next, we define $\tau^*$ to be the first entry time of $X_t(x_0)$ to $\mathcal{B}_\eta(0)$, that is 
$$
\tau^*=\inf \left\{t \geq t_0 : X_t(x_0) \in \mathcal{B}_\eta( 0 )\right\}.
$$ 
    
Let $\xi=- \max_{x\in V^{c_2} \setminus \mathcal{B}_\eta(0)} LV(x)$. By It\^{o}'s formula and $LV < -\xi$ on $V^{c_2} \setminus \mathcal{B}_\eta(0)$, we have
\begin{equation}
\label{eq6:p-roa}
\begin{aligned}
0 & \leq V(X_{\tau^* \wedge \tau \wedge t}) = V(x_0)+{\mathbb{E}} \int_{t_0}^{\tau^* \wedge \tau \wedge t} L V(X_s) d s \\
& \leq V(x_0)-\xi {\mathbb{E}}\left[\tau^* \wedge \tau \wedge t-t_0\right].
\end{aligned}
\end{equation}
Since on $\left\{\tau^* \wedge \tau \geq t\right\}$ we have $\tau^* \wedge \tau \wedge t=t$,
\[
\begin{aligned}
\mathbb{E}\left[\tau^* \wedge \tau \wedge t - t_0\right] &\geq \int_{\Omega} 1_{\{\tau^* \wedge \tau \geq t\}} \cdot (t-t_0)\,d\mathbb{P}(\omega) \\
&= (t-t_0)\, \mathbb{P}\left[\tau^* \wedge \tau \geq t\right].
\end{aligned}
\]
Combining this with \eqref{eq6:p-roa}, we have
$$
\mathbb{P}\left[\tau^* \wedge \tau \geq t\right] \leq \frac{V(x_0)}{(t-t_0) \xi} , \text { for each } t.
$$
By letting $t \rightarrow \infty$, we get $\mathbb{P}\left[\tau^* \wedge \tau \geq \infty\right] = 0$, which implies $\mathbb{P}\left[\tau^* \wedge \tau<\infty\right]=1$. On $\{\tau=\infty\}$, we have $\mathbb{P}\left[\tau^*<\infty\right]=1$ and
$$
\begin{aligned}
& \mathbb{P}\left[\underset{t \rightarrow \infty}{\limsup} \, |X_t(x_0)| \leq r\right] \\
= & \mathbb{P}\left[\underset{t \rightarrow \infty}{\limsup} \, |X_t(x_0)| \leq r \mid 
 \tau^*<\infty \right] \\
\geq & \mathbb{P}\left[ |X_t(x_0)| \leq r, \forall t \geq \tau^* \mid \tau^*<\infty\right] \geq 1-\varepsilon .
\end{aligned}
$$
Since $\varepsilon$ and $r$ are arbitrary, we can obtain
\begin{equation}
\label{eq7: p-roa}
    \mathbb{P}\{\lim _{t \rightarrow \infty}|X_t(x_0)|=0 \mid \tau=\infty\}=1.
\end{equation}
Since $\tau = \infty$ is equivalent to $X_t(x_0) \in V^{c_2}$ for all $t \geq t_0$. We can rewrite \eqref{eq6:p-roa} as
$$
\mathbb{P}\{\lim _{t \rightarrow \infty}|X_t(x_0)|=0 \mid X_t(x_0) \in V^{c_2}\}=1 .
$$
Furthermore, combining this with \eqref{eq5: p-roa}, we can obtain 
$$
\begin{aligned}
&  \mathbb{P}\{\lim _{t \rightarrow \infty} X_t(x_0) = 0 \,\, \text{and} \,\, X_t(x_0) \in V^{c_2}, \, \forall t \geq t_0\} \\
= & \mathbb{P}\{\lim _{t \rightarrow \infty}|X_t(x_0)|=0 \mid X_t(x_0) \in V^{c_2}, \forall t \geq t_0\} \\
&  \times\, \mathbb{P}\left\{X_t(x_0) \in V^{c_2}, \,  \forall t \geq t_0\right\}\\
\geq & 1- \frac{V(x_0)}{c_2}.
\end{aligned}
$$
\romannumeral 3)
The proof follows the same strategy as in \romannumeral 2). 
For any initial condition $x_0 \in V^{c_2}\setminus V^{c_1}$, let $\tau$ be as defined in the proof of part i) and \( \sigma \) be the first entry time of $X_t(x_0)$ to \( V^{c_1} \), given by
\[
\sigma = \inf \left\{t \geq t_0 : X_t(x_0) \in V^{c_1} \right\}.
\]
For \( t \le \tau \wedge \sigma \), \( X_t(x_0) \in V^{c_2} \setminus V^{c_1} \), where \( LV(X_t(x_0)) \le -\zeta \). We have
%Let $\zeta=- \max_{x\in V^{c_2} \setminus V^{c_1}} LV(x)$, we have 
% By It\^{o}'s formula, 
% \[
% \begin{aligned}
% V(X_{\tau \wedge \sigma \wedge t})&=V(x_0) + \int_{t_0}^{\tau \wedge \sigma \wedge t} LV(X_s)\, ds \\
% &+ \int_{t_0}^{\tau \wedge \sigma \wedge t} V_x(X_s) \, g(X_s)\, dB(s), \quad \forall t \ge t_0.
% \end{aligned}
% \]
% Taking expectations and using \( LV \leq 0 \) for $x_0 \in V^{c_1} \setminus \Omega$  :
\begin{equation}
\begin{aligned}
\label{eq: zeta}
0 &\leq \mathbb{E}[V(X_{\tau \wedge \sigma \wedge t})] = V(x_0) +{\mathbb{E}} \int_{t_0}^{\sigma \wedge \tau \wedge t} L V(X_s) d s\\
& \le V(x_0)-\zeta {\mathbb{E}}\left[\sigma \wedge \tau \wedge t-t_0\right].
\end{aligned}
\end{equation}
Since on $\left\{\sigma \wedge \tau \geq t\right\}$ we have $\sigma \wedge \tau \wedge t=t$,
\[
\begin{aligned}
\mathbb{E}\left[\sigma \wedge \tau \wedge t - t_0\right] &\geq \int_{\Omega} 1_{\{\sigma \wedge \tau \geq t\}} \cdot (t-t_0)\,d\mathbb{P}(\omega) \\
&= (t-t_0)\, \mathbb{P}\left[\sigma \wedge \tau \geq t\right].
\end{aligned}
\]
Combining this with \eqref{eq: zeta}, we have
$$
\mathbb{P}\left[\sigma \wedge \tau \geq t\right] \leq \frac{V(x_0)}{(t-t_0) \zeta} , \text { for each } t.
$$
By letting $t \rightarrow \infty$, we get $\mathbb{P}\left[\sigma \wedge \tau \geq \infty\right] = 0$, which implies $\mathbb{P}\left[\sigma \wedge \tau<\infty\right]=1$. On $\{\tau=\infty\}$, we have $\mathbb{P}\left[\sigma<\infty\right]=1$,
that is,
\begin{equation}
\label{eq8: p-roa}
    \mathbb{P}\{\exists T\ge t_0\,\text{ s.t. }\, X_T(x_0) \in V^{c_1} \mid \tau=\infty\}=1.
\end{equation}
Since $\tau = \infty$ is equivalent to $X_t(x_0) \in V^{c_2}$ for all $t \geq t_0$, we can rewrite \eqref{eq8: p-roa} as
$$
\mathbb{P}\{\exists T\ge t_0\,\text{ s.t.}\, X_T(x_0) \in V^{c_1} \mid X_t(x_0) \in V^{c_2}\}=1 .
$$
With a slightly modified argument as in the proof of part ii), we can show that
\[
\mathbb{P}\{ X_t(x_0) \in V^{c_2},\,\forall t\ge t_0\} \geq 1-\frac{V(x_0)}{c_2}.
\]
Thus,
{
\[
    \begin{aligned}
    &\mathbb{P}\{X_t(x_0) \in V^{c_2}, \, \forall t \geq t_0 \, \text{and}\,\exists T\ge t_0\,\text{s.t.}\,  X_T(x_0) \in V^{c_1} \} \\
    &\geq 1 - \frac{V(x_0)}{c_2}, \quad \quad \quad \forall x \in V^{c_2}\setminus V^{c_1} .
    \end{aligned}
\]}
% Since \( LV(X_t(x_0)) < 0 \) holds for \( X_t(x_0) \in V^{c_2} \setminus V^{c_1} \), we have
% \[
% \mathbb{P}\left\{x(t; x_0) \in V^{c_1}, \, \forall t \geq t_0\right\} \geq 1 - \frac{c_1}{c_2}.
% \]
\end{proof}
% \begin{cor}
% Let Assumption 1 hold. Assume there exists \( V \in C^2(\mathcal{D}; \mathbb{R}_{\geq 0}) \) and a neighborhood \( \Omega \subseteq V_{c_2} \) containing the origin such that \( LV(x) < 0 \) for all \( x \in V_{c_2} \setminus \Omega \), where \( V_{c_2} \subseteq \mathcal{D} \).
% Assume $\Omega \subseteq V_{c_1}$. Then, for any \( x_0 \in V_{c_1} \), the solution \( X_t(x_0) \) satisfies:
% \[
% \mathbb{P}\left\{X_t(x_0) \in V_{c_2}, \, \forall t \geq t_0\right\} \geq 1 - \frac{c_1}{c_2}.
% \]
% \end{cor}
% \begin{proof}
% The proof follows the same strategy as in the proposition. We begin by defining the first entry time into \(\Omega\) as
% \[
% \sigma = \inf\{t \geq t_0 : X_t(x_0) \in \Omega\}.
% \]
% Then, let $\sigma \wedge t \to \infty $, the probability of exiting \( V_{c_2} \) is bounded by the probability of exiting before entering \( \Omega \):
%    \[
%    \mathbb{P}\{\tau < \infty\} \leq \frac{c_1}{c_2}.
%    \]
%    Thus,
%    \[
%    \mathbb{P}\left\{X_t(x_0)\in V_{c_2}, \, \forall t \geq t_0\right\} \geq 1 - \frac{c_1}{c_2}.
%    \]
% \end{proof} 

In our computation, both quadratic Lyapunov functions and neural Lyapunov functions are used to certify probabilistic attraction. For this purpose, we need to formulate the following result. Given one Lyapunov function, we seek to use another to enlarge the probabilistic region of attraction, as stated in the following theorem.

\begin{thm}
\label{th: enlarge p-roa}
Let Assumption $1$ hold and $\mathcal{D}\subseteq \mathbb{R}^n$. Let $V\in C^2\left(\mathcal{D};\mathbb{R}_{\geq 0}\right)$ satisfy the conditions in Theorem \ref{th: LV} iii) for some $c_2>0$. Let $c_1\in (0,c_2)$. 
%Denote a  neighborhood of the origin by $\Omega$. 
Assume that there exists a function $ W\in C^2\left(\mathcal{D};\mathbb{R}_{\geq 0}\right)$ and some $\zeta>0$ satisfying 
\begin{equation}\label{eq:LW}
L W(x) \le -\zeta, \quad \quad \forall x \in W^{\beta_2} \setminus W^{\beta_1},   
\end{equation}
where $\beta_2>\beta_1>0$ and $ W^{\beta_2} \subseteq \mathcal{D}$. 
Furthermore, assume that $W^{\beta_1}\subseteq V^{c_1}\subseteq V^{c_2} \subseteq W^{\beta_2} \subseteq \mathcal{D}$. Then, for any \( x_0 \in  W^{\beta_2} \), we have 
\[
\begin{aligned}
\mathbb{P}\{ &\lim_{t \to \infty} X_t(x_0) = 0 \text{ and } X_t(x_0) \in W^{\beta_2},\ \forall t \geq t_0 \} \\
&\geq ( 1 - \dfrac{c_1}{c_2} ) ( 1 - \frac{W(x_0)}{\beta_2} ).
\end{aligned}
\]
That is, for all \( x_0 \in   W^{\beta_2}\), \( X_t(x_0) \) will remain in $W^{\beta_2}$ and converge to the equilibrium point with probability at least \( ( 1 - \dfrac{c_1}{c_2} ) ( 1 - \dfrac{W(x_0)}{\beta_2} ) \).
\end{thm}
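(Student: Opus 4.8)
The plan is to decompose the trajectory into two stages joined by the strong Markov property: an \emph{outer} stage governed by $W$ that funnels the path from $W^{\beta_2}$ into the inner set $W^{\beta_1}\subseteq V^{c_1}$ while keeping it inside $W^{\beta_2}$, followed by an \emph{inner} stage governed by $V$ that drives the path from $V^{c_1}$ to the origin while keeping it inside $V^{c_2}\subseteq W^{\beta_2}$. The two survival probabilities supplied by Theorem \ref{th: LV} then multiply to give the claimed bound.

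First, for $x_0\in W^{\beta_2}\setminus W^{\beta_1}$, I would apply Theorem \ref{th: LV} iii) to $W$ with levels $\beta_1<\beta_2$ and the given $\zeta$ (using the hypothesis $LW\le-\zeta$ on $W^{\beta_2}\setminus W^{\beta_1}$). This yields that with probability at least $1-W(x_0)/\beta_2$ the path stays in $W^{\beta_2}$ for all $t\ge t_0$ and reaches $W^{\beta_1}$ at the first entry time
\[
\sigma=\inf\{t\ge t_0 : X_t(x_0)\in W^{\beta_1}\},
\]
which is then finite. On this event $X_\sigma(x_0)\in W^{\beta_1}\subseteq V^{c_1}$, so $V(X_\sigma(x_0))\le c_1$.

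Next I would restart at the stopping time $\sigma$ via the strong Markov property. The hypothesis that $V$ satisfies the conditions of Theorem \ref{th: LV} iii) for every admissible inner level, together with positive definiteness of $V$, amounts to $LV<0$ on $V^{c_2}\setminus\{0\}$, which is exactly the hypothesis of Theorem \ref{th: LV} ii) on $V^{c_2}$. Applying part ii) to $V$ from the new initial point $X_\sigma(x_0)\in V^{c_1}$ gives that, conditionally, with probability at least $1-V(X_\sigma(x_0))/c_2\ge 1-c_1/c_2$ the post-$\sigma$ path converges to the origin and remains in $V^{c_2}$ for all $t\ge\sigma$. Since $V^{c_2}\subseteq W^{\beta_2}$, remaining in $V^{c_2}$ after $\sigma$ glued to remaining in $W^{\beta_2}$ on $[t_0,\sigma]$ keeps the entire path inside $W^{\beta_2}$, and the limit is the origin. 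Multiplying the outer survival probability by this conditional inner probability produces $(1-c_1/c_2)(1-W(x_0)/\beta_2)$. The degenerate case $x_0\in W^{\beta_1}$ is handled by taking $\sigma=t_0$ and invoking part ii) directly, giving at least $1-c_1/c_2\ge(1-c_1/c_2)(1-W(x_0)/\beta_2)$.

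I expect the main obstacle to be the rigorous use of the strong Markov property at the random time $\sigma$: one must condition on $\mathcal{F}_\sigma$, check that the reach-and-stay event for $W$ is $\mathcal{F}_\sigma$-measurable, and confirm that the inner convergence event depends only on the shifted path so that the two probabilities genuinely factor rather than merely satisfying a union-type estimate. A secondary point needing care is verifying that containment in $V^{c_2}$ for $t\ge\sigma$ spliced with containment in $W^{\beta_2}$ on $[t_0,\sigma]$ certifies containment in $W^{\beta_2}$ for all $t\ge t_0$, which is precisely where the nesting hypothesis $V^{c_2}\subseteq W^{\beta_2}$ (and $W^{\beta_1}\subseteq V^{c_1}$) is indispensable.
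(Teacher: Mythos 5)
Your proposal is correct and follows essentially the same two-stage decomposition as the paper: an outer event controlled by Theorem \ref{th: LV} iii) applied to $W$ (entry into $W^{\beta_1}\subseteq V^{c_1}$ while staying in $W^{\beta_2}$), followed by restarting at the entry time and applying Theorem \ref{th: LV} ii) to $V$, with the two probabilities multiplied via conditioning. You are in fact somewhat more explicit than the paper about the strong Markov property at the stopping time and about the degenerate case $x_0\in W^{\beta_1}$, which the paper leaves implicit.
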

\begin{proof}
For any initial condition $x_0 \in W^{\beta_2} \setminus  W^{\beta_1}$,
denote by $E_1$ the following set:
% \[
% E_1 = \left\{  X_t( x_0) \in V^{c_1} , \, \forall t \geq t_0 \right\}.
% \]
\[
\left\{X_t(x_0) \in W^{\beta_2}, \, \forall t \geq t_0\, \text{and }\exists \, T\ge t_0\text{ s.t.}\, X_T(x_0) \in V^{c_1}\right\} 
\]
It follows from \eqref{eq:LW}, Theorem \ref{th: LV} iii) (applied to $W$), and $ W^{\beta_1} \subseteq V^{c_1}$ that, for any \( x_0 \in W^{\beta_2} \setminus  W^{\beta_1}\),
\[
\mathbb{P}(E_1)\geq 1-\frac{W(x_0)}{\beta_2}.
\]

Define $\tau$ be the first time $X_t(x_0)$ enters the set $V^{c_1}$. The state $X_{\tau}(x_0)$ serves as the initial condition for the evolution of $X_t(X_{\tau}(x_0))$. 

Denote by $E_2$ the following set:
\[
\bigl\{ \lim_{t \to \infty} X_t(X_{\tau}(x_0)) = 0 \text{ and } X_t(X_{\tau}(x_0)) \in V^{c_2}, \, \forall t \geq 0 \bigr\}.
\]
By the assumptions on $V$ and Theorem \ref{th: LV} ii), we obtain
\[
\mathbb{P}(E_2\mid E_1)\geq 1-\frac{V(X_{\tau}(x_0))}{c_2} = 1-\frac{c_1}{c_2}.
\]
% Since $ V^{c_1} \subseteq V^{c_2}$, we have 
% \[
% \mathbb{P}(E_2 \mid E_1)=\mathbb{P}(E_2)\geq 1-\frac{c_1}{c_2}.
% \]
It follows that
\[
\mathbb{P}(E_1 \cap E_2)\\
=  \mathbb{P}(E_2 \mid E_1)\mathbb{P}(E_1)\\
\geq ( 1 - \frac{c_1}{c_2} ) ( 1 - \frac{W(x_0)}{\beta_2} ).
\]
\end{proof}

Taking the computed quadratic Lyapunov function as \(V\) and the neural Lyapunov function as \(W\), by Theorem \ref{th: enlarge p-roa}, we obtain verified probabilistic ROAs for the neural Lyapunov function. This approach effectively resolves the issue that a neural Lyapunov function may not be well-trained and fail to satisfy the Lyapunov condition around the origin. We derive a general attraction probability function using a piecewise formulation that characterizes a conservative lower bound on the attraction probability for an initial point in \(\mathcal{C}\), as stated in the following proposition.

\begin{prop}
\label{prop:p-roa}
Let $V$ and $W$ be the functions and  \(\beta_1\), \(\beta_2\), \(c_1\), and \(c_2\) be the constants defined in Theorem \ref{th: enlarge p-roa}. Then, the attraction probability for an initial state \(x_0 \in W^{\beta_2}\), $\mathbb{P}(x_0)$, defined as
$$
\mathbb{P}(x_0) := \mathbb{P}\bigl\{ \lim_{t \to \infty} X_t(x_0) = 0\bigr\},
$$
is at least
$$
p(x_0):= {
\begin{cases}
1 - \dfrac{V(x_0)}{c_2}, \quad \quad \quad \quad \quad \quad \quad \text{if } W(x_0) < \beta_1, \\
\displaystyle \max\bigl\{  \bigl(1 - \frac{W(x_0)}{\beta_2}\bigr)
\bigl(1 - \frac{c_1}{c_2}\bigr), \\
\quad 1 - \dfrac{V(x_0)}{c_2}\bigr\}, \quad \quad \quad \, \text{if } \beta_1 \le W(x_0) \le \beta_2.
\end{cases}}
$$
Furthermore, \(p(x_0)\) is continuous with respect to \(x_0\) and serves as a conservative lower bound on the attraction probability.
\end{prop}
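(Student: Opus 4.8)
The plan is to establish two separate facts: that $\mathbb{P}(x_0)\ge p(x_0)$ on each branch of the piecewise definition (the ``conservative lower bound'' claim), and that the two branches glue continuously across the interface $\{W(x_0)=\beta_1\}$ (the continuity claim). Both reduce to repackaging the already-proved Theorem~\ref{th: LV}~ii) and Theorem~\ref{th: enlarge p-roa}, together with the nesting $W^{\beta_1}\subseteq V^{c_1}\subseteq V^{c_2}\subseteq W^{\beta_2}$.

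For the lower bound, I would first observe that $1-V(x_0)/c_2$ is always a valid lower bound for $\mathbb{P}(x_0)$: if $V(x_0)\le c_2$ then $x_0\in V^{c_2}$, and Theorem~\ref{th: LV}~ii) applied to $V$ gives $\mathbb{P}\{\lim_t X_t(x_0)=0,\ X_t(x_0)\in V^{c_2}\ \forall t\}\ge 1-V(x_0)/c_2$, so discarding the containment event yields $\mathbb{P}(x_0)\ge 1-V(x_0)/c_2$; if instead $V(x_0)>c_2$ the right-hand side is negative and the inequality is vacuously true. Similarly, Theorem~\ref{th: enlarge p-roa} gives $\mathbb{P}(x_0)\ge(1-c_1/c_2)(1-W(x_0)/\beta_2)$ for $x_0\in W^{\beta_2}$, again after dropping the containment event. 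In the branch $W(x_0)<\beta_1$ we have $x_0\in W^{\beta_1}\subseteq V^{c_1}\subseteq V^{c_2}$, so the $V$-bound applies non-vacuously and equals $p(x_0)$; in the branch $\beta_1\le W(x_0)\le\beta_2$ both bounds are valid, hence $\mathbb{P}(x_0)$ is at least their maximum, which is exactly $p(x_0)$.

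For continuity, each branch is a continuous function of $x_0$ on its region, since $V$ and $W$ are $C^2$ and the maximum of continuous functions is continuous; the only thing left to check is that the two expressions agree on the interface $\{W(x_0)=\beta_1\}$. There $x_0\in W^{\beta_1}\subseteq V^{c_1}$ forces $V(x_0)\le c_1$, hence $1-V(x_0)/c_2\ge 1-c_1/c_2\ge(1-\beta_1/\beta_2)(1-c_1/c_2)$, using $0<\beta_1<\beta_2$. Thus on the interface the maximum in the second branch is attained by the term $1-V(x_0)/c_2$, which coincides with the value of the first branch. A sequential argument, splitting any converging sequence into points with $W<\beta_1$ and points with $W\ge\beta_1$, then shows $p$ is continuous across $\{W(x_0)=\beta_1\}$, and therefore on all of $W^{\beta_2}$.

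The routine parts are the invocations of the two cited theorems and the elementary estimate on the interface. The one genuinely load-bearing observation, and the step I expect to require the most care, is that the apparent discontinuity at $\{W(x_0)=\beta_1\}$ is removed precisely by the nesting hypothesis $W^{\beta_1}\subseteq V^{c_1}$: it is this inclusion that guarantees $V(x_0)\le c_1$ on the interface and hence that the $W$-term never dominates there. Without this nesting the two branches would in general fail to match, so I would foreground it as the key to the continuity claim.
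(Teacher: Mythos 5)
Your proposal is correct and follows essentially the same route as the paper: both derive the lower bound by combining Theorem~\ref{th: LV} (for the $V$-term, trivially valid when $V(x_0)>c_2$) with Theorem~\ref{th: enlarge p-roa} (for the $W$-term), and both establish continuity by observing that the nesting $W^{\beta_1}\subseteq V^{c_1}$ forces $V(x_0)\le c_1$ on the interface $\{W(x_0)=\beta_1\}$, so that the $V$-branch dominates the maximum there. The only cosmetic difference is that the paper notes the strict inequality $1-V(x_0)/c_2>(1-W(x_0)/\beta_2)(1-c_1/c_2)$ persists in a neighborhood of the interface, so that $p$ locally equals the continuous function $1-V/c_2$, whereas you glue the two branches with a sequential argument; both are valid.
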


\begin{proof}
The fact that \(\mathbb{P}(x_0) \ge p(x_0)\) follows immediately from  Theorems \ref{th: LV} and \ref{th: enlarge p-roa}.  Indeed, for any $x_0$, we have $\mathbb{P}(x_0)\ge 1-V(x_0)/c_2$ by Theorem \ref{th: LV} iii) and $\mathbb{P}(x_0)\ge (1 - W(x_0)/\beta_2)
(1 - c_1/c_2)$ by Theorem \ref{th: enlarge p-roa}. Note that these bounds hold even if $V(x_0)>c_2$ or $W(x_0)>\beta_2$ (although then the bounds will be trivial). 

We now show that $p(x_0)$ is continuous in $x_0$. We only need to verify this at every $x_0$ such that $W(x_0)=\beta_1$. Since $W^{\beta_1}\subset V^{c_1}$, we have $V(x_0)\le c_1$. Hence, $1-V(x_0)/c_2\ge 1-c_1/c_2 > (1 - W(x_0)/\beta_2)
(1 - c_1/c_2)$. By continuity of both \(V(x)\) and \(W(x)\) on \(\mathcal{D}\), we have $1-V(x_0)>(1 - W(x_0)/\beta_2)
(1 - c_1/c_2)$ holds in a small neighborhood of $x_0$, on which we have $p(x_0)=1-V(x_0)/c_2$. Hence, $p(x_0)$ is continuous at $x_0$ because $V$ is. 
\end{proof}

\begin{rem}
To apply Proposition \ref{prop:p-roa}, we need to determine the constants \(\beta_1\), \(\beta_2\), \(c_1\), and \(c_2\) as described in Theorem \ref{th: enlarge p-roa}. We can choose them as follows to optimize the lower bound \( p(x_0) \) given by Proposition \ref{prop:p-roa}. Suppose we already have a Lyapunov function \( V \) such that \( LV(x) < 0 \) for all \( x \in V^{c_2} \setminus \{0\} \), where \( c_2 \) is chosen as the largest such value.  A premise for applying Theorem \ref{th: enlarge p-roa} to further enlarge the probabilistic region of attraction is the existence of another Lyapunov function \( W \) such that \( LW(x) < 0 \) holds on \( W^{\beta_2} \setminus W^{\beta_1} \), where \( W^{\beta_1} \subseteq V^{c_1} \subseteq V^{c_2} \subseteq W^{\beta_2} \). We need \( V^{c_2} \subseteq W^{\beta_2} \) so that the region of attraction can be enlarged using the Lyapunov function \( W \), and we need \( W^{\beta_1} \subseteq V^{c_1} \) to draw conclusions about probabilistic attraction using the Lyapunov function \( V \).  Hence, \( \beta_1 \) is chosen as the smallest and \( \beta_2 \) as the largest such that \( LW(x) < 0 \) holds on \( W^{\beta_2} \setminus W^{\beta_1} \). The constant \( c_1 \) is the smallest value ensuring that \( \{x : W(x) \le \beta_1\} \subseteq \{x : V(x) \le c_1\} \). Clearly, minimizing \( \beta_1 \) and \( c_1 \), and maximizing \( \beta_2 \), leads to less conservative results in terms of \( p \)-ROA estimates.
% We seek a sublevel set of $W$ that is contained within the verified sublevel set $V^{c_2}$, meaning the sublevel set set $\{ x \mid W(x) \leq w_0 \}$ should satisfy $\{ x \mid W(x) \leq w_0 \} \subseteq \{ x \mid V(x) \leq c_2 \}$. This establishes a connection between $W$ and $V$. Our objective is to find the minimal sublevel set of $W$ that contains the region $\Omega$ from Theorem \ref{th: enlarge p-roa}, which fails to satisfy the stochastic Lyapunov function conditions. In practice, \(\beta_1\) is chosen as the smallest value such that \(LW(x) < 0\) for all \(x\) in the region \(\{x: \beta_1 \leq W(x) \leq w_0\}\), and \(\beta_2\) is selected as the largest value for which \(LW(x) < 0\) for all \(x\) in \(\{x: w_0 \leq W(x) \leq \beta_2\}\). The constant \(c_1\) is the smallest value ensuring that \(\{x: W(x) \le \beta_1\} \subseteq \{x: V(x) \le c_1\}\). 
% Since $V^{c_1}$ is the smallest sublevel set for V contains $W^{\beta_1}$, so $V^{c_1} \cap W^{\beta_1}$ should be the point \(W(x_0)=\beta_1\), the matching condition
% \[
% 1 - \frac{V(x_0)}{c_2} = \left(1 - \frac{\beta_1}{\beta_2}\right) \left(1 - \frac{c_1}{c_2}\right)
% \]
% is satisfied at \(W(x_0)=\beta_1\). $c_2$ is the largest value for which the stochastic Lyapunov condition $L V<0$ holds within the verified sublevel set $\left\{x \mid V(x) \leq c_2\right\}$. These choices are made to minimize conservativeness, and the matching condition is verified by construction.
\end{rem}

\section{Numerical Examples}

In this section, we present numerical experiments on stochastic systems. To demonstrate that the proposed PINN approach reliably approximates \(W(x)\), we extend the LyZNet tool \cite{liu2024tool} to learn a neural Lyapunov function and use dReal \cite{gao2013dreal} to formally verify the stochastic Lyapunov condition. This enables the characterization of the largest probabilistic region of attraction \(\mathcal{C}\). The results confirm that the learned functions capture the probabilistic attraction accurately and quantify the attraction probabilities via Proposition \ref{prop:p-roa}.

\begin{eg}[Reversed Van der Pol]

\label{van_der_pol}
Consider the reversed Van der Pol oscillator
$$
 \begin{cases}
d{X}_{1}(t)=-X_{2}(t) dt +\alpha  X_1(t) dB_1(t), \\
d{X}_{2}(t)=(X_{1}(t) - (1-X_{1}^{2}(t) ) X_{2}(t)) dt + \beta  X_2(t) dB_2(t),
\end{cases}
$$
which has a stable equilibrium point at the origin. The linearization at the origin is given by \eqref{eq: linearized sde} with \(A=\left[\begin{array}{ll}0 & -1 \\ 1 & -1\end{array}\right]\), $S_1=\left[\begin{array}{ll}\alpha & 0 \\ 0 & 0 \end{array}\right]$ and $S_2=\left[\begin{array}{ll}0 & 0 \\ 0 & \beta \end{array}\right]$. Taking the diffusion parameters as $\alpha = \beta=0.5$ and solving the stochastic Lyapunov equation \eqref{eq:stochastic LF condition} with \(Q=I\) gives \(P=\left[\begin{array}{cc}2.2439 & -0.7805 \\ -0.7805 & 1.4634\end{array}\right]\). With \(r=0.9999\), we can verify \eqref{local condition} with \(c=0.3317\). Next, we preform an additional step to check that $ LV_p\le -\epsilon$ holds on the ellipsoid \(\Omega_{c}=\left\{x \in \mathbb{R}^{n}: x^{T} P x \leq c\right\}\) with $c=2.3151$, for some small $\epsilon>0$. Hence, $\Omega_c$ provides a verified local probabilistic ROA for the origin.

In order to better capture the largest probabilistic ROA \(\mathcal{C}\), we employ a neural network with three hidden layers (10 neurons per layer) trained using Zubov's PDE \eqref{eq: stochastic zubov}.  For data generation, we estimate the expectation in \eqref{eq: W} by running 100 simulations for each initial point. In this work, we set \(g(x)=0.1 \|x\|^{2}\), same as the setting for the deterministic system in \cite{liu2025physics}. The learned neural Lyapunov functions are formally verified using dReal. Additionally, we compare the verified probabilistic ROA \(\mathcal{C}\) from our neural Lyapunov function with the Lyapunov function obtained via the SOS method.   

\begin{figure}[h]
    \centering
    \includegraphics[width=0.9\linewidth]{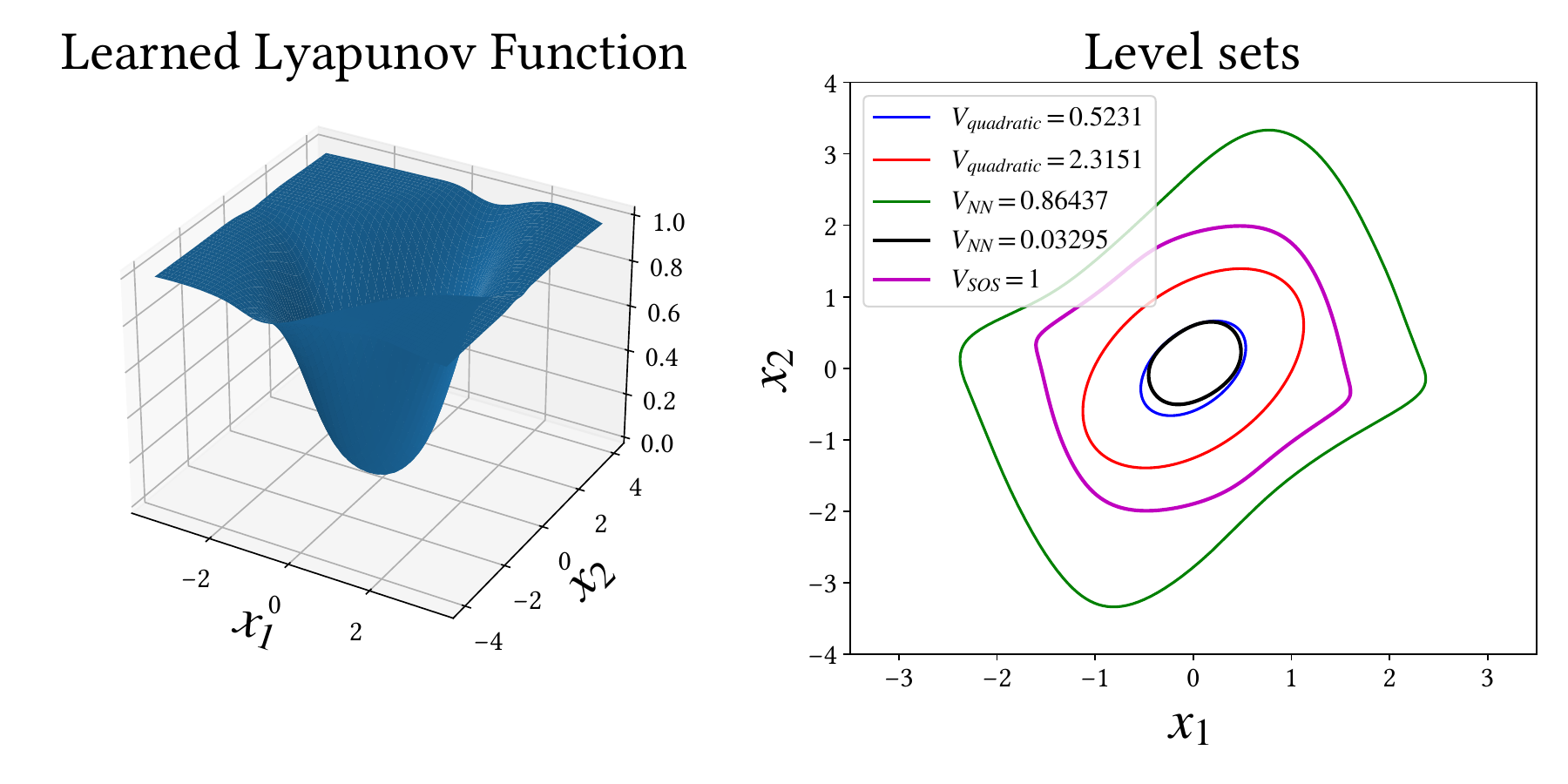}
    \caption{A Neural Lyapunov function trained using stochastic Zubov PDE \eqref{eq: stochastic zubov} is capable of approximately representing a verifiable Lyapunov function that closely approximates the probabilistic ROA for the stable equilibrium point of the Van der Pol equation. The green, pink, and red curves represent the verified \(\mathcal{C}\) for the neural Lyapunov function, the SOS-based Lyapunov function, and the quadratic Lyapunov function, respectively. The black curve represents the region where the neural Lyapunov function is not well-trained and fails to satisfy the stochastic Lyapunov condition. The blue curve indicates a lower bound for the quadratic Lyapunov function that encloses the black curve. With the verified level sets, the attraction probability function can be computed by Proposition \ref{prop:p-roa}.}
    \label{fig: nn_LF and level sets}
\end{figure}

The results are shown in Figure \ref{fig: nn_LF and level sets}. In the right panel of Figure \ref{fig: nn_LF and level sets}, the green curve represents the largest verified probabilistic ROA in terms of a sublevel set of the neural Lyapunov function on which the stochastic Lyapunov condition is satisfied. The pink curve shows the verified sublevel set computed via an SOS Lyapunov function, and the red curve corresponds to that obtained from a quadratic Lyapunov function. Note that the neural Lyapunov function yields a larger verified \(\mathcal{C}\) than the other two approaches.
Additionally, the black curve marks the lower bound of the region where the neural Lyapunov function satisfies the stochastic Lyapunov condition, while the blue curve indicates a lower bound for the quadratic Lyapunov function that encloses the region where the neural Lyapunov function fails to verify the condition.

\begin{figure}[h]
    \centering
    \includegraphics[width=0.46\linewidth]{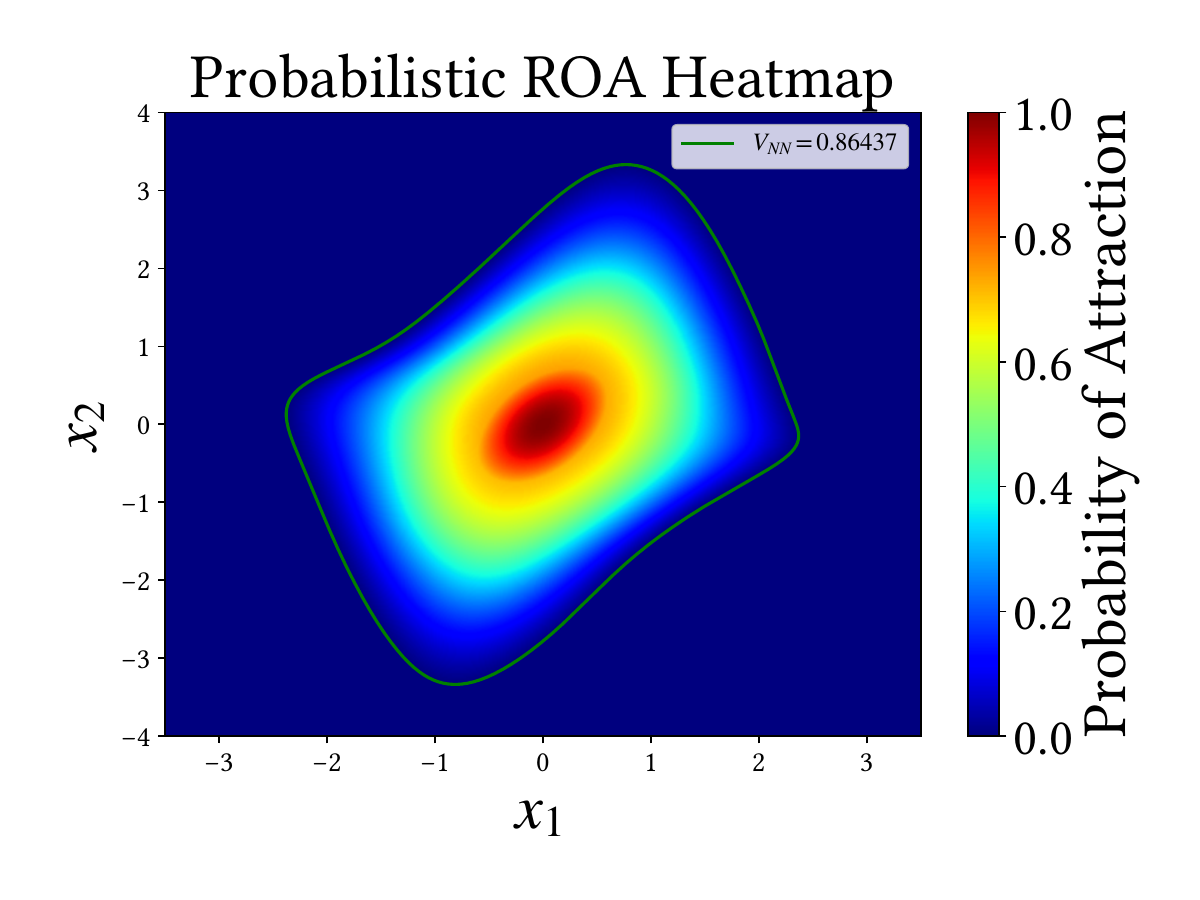}
    \includegraphics[width=0.45\linewidth]{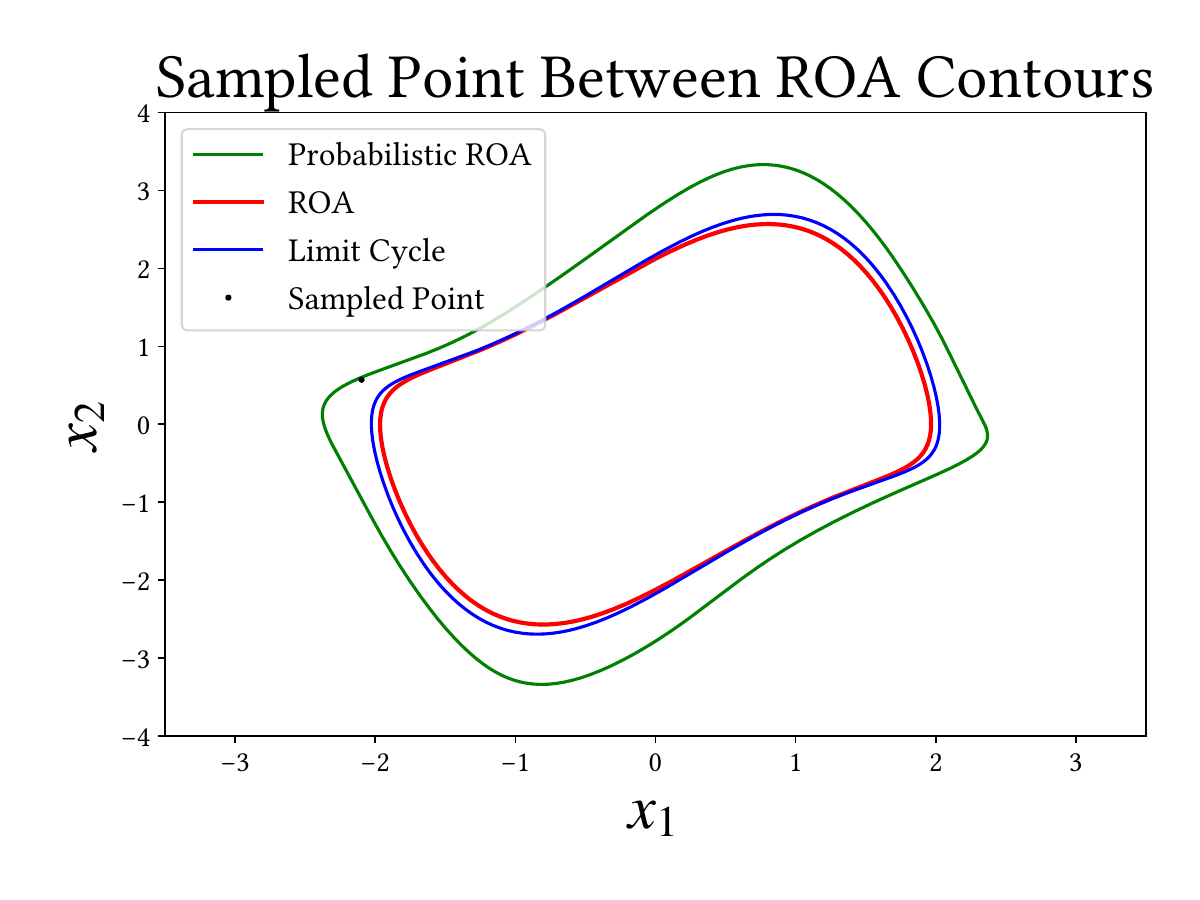}
    \caption{The probabilistic ROA Heatmap visualizes the attraction probability for the initial point in \(\mathcal{C}\). Comparison between ROA and \(\mathcal{C}\) indicates that, under certain conditions, the probabilistic ROA can extend beyond the limit cycle for deterministic system, suggesting that stochastic effects may stabilize the system. }
    \label{fig: p-roa heatmeap}
\end{figure}
\end{eg}

Based on the verified level sets shown in Figure \ref{fig: nn_LF and level sets} and by applying Proposition \ref{prop:p-roa}, we generate a heatmap in the left image of Figure \ref{fig: p-roa heatmeap} to visualize the attraction probability for the initial point in \(\mathcal{C}\).
As depicted in the right panel of Figure \ref{fig: p-roa heatmeap}, the noise term in the stochastic differential equation can sometimes enlarge the region of attraction.  In our experiments, we sampled points from within the probabilistic ROA while excluding those on the deterministic limit cycle (indicated by the blue curve, which marks the boundary of the deterministic ROA). We then used this sampling point as the initial value and simulate 10,000 trajectories from it to estimate the attraction probability from this point. The results show that over $10\%$ of the trajectories converged. This indicates that random disturbances can, under certain conditions, have a stabilizing effect on the stochastic system.

% \begin{eg}[Lorenz system]
% Consider the Lorenz system
% $$
% {
% \begin{cases}
% d X_1(t)=\sigma\left(X_2(t)-X_1(t)\right) d t+\sigma k(X(t)) d B_1(t), \\
% d X_2(t)=\left(X_1(t)\left(\rho-X_3(t)\right)-X_2(t)\right) d t+k(X(t)) d B_2(t),\\
% d X_3(t)=\left(X_1(t) X_2(t)-\beta X_3(t)\right) d t+k(X(t)) d B_3(t),
% \end{cases}}
% $$
% where $k(X(t))=\alpha X_1(t)$, with the parameters chosen as  $\sigma=1.5, \rho=\alpha=0.5$, and $\beta=1$.

% \end{eg}

%%%%%%%%%%%%%%%%%%%%%%%%%%%%%%%%%%%%%%%%%%%%%%%%%%%%%%%%%%%%%%%%%%%%%%%%%%%%%%%%
\section{Conclusions and Future Works}

We employ a data-driven PINN to approximate the solution of stochastic Zubov PDE for stochastic system and use dreal to formally verify the stochastic Lyapunov condition for the learned neural Lyapunov function both locally and over the largest probabilistic region of attraction $\mathcal{C}$. We also derive sufficient stochastic Lyapunov conditions for probabilistic attraction and propose an attraction probability function based on both the local and neural Lyapunov functions. In our experiments, we generate a heatmap to visualize the attraction probability. Our results demonstrate that the neural approach yields a larger verified probabilistic ROA than the Lyapunov function computed by the SOS method.

Future research will focus on improving scalability for high-dimensional systems. A significant challenge remains in addressing high-dimensional dynamic systems, which are inherently difficult to solve even with neural networks. This difficulty is particularly pronounced during training data generation and verification, as increased dimensionality demands substantially more computational resources and time. Another promising direction is to try other verification tools, e.g., \cite{zhou2024scalable}, which would help enlarge the probabilistic region of attraction \(\mathcal{C}\). Additionally, Proposition \ref{prop:p-roa} only provides a conservative estimate of the $p$-ROA. It would be of interest to investigate how to optimize the attraction probabilities. 
%We plan to extend the idea in \cite{camilli2002characterizing} to use a family of solutions to Zubov's equation to approach the $\{x_0 \in \mathbb{R}^{n} : \mathbb{P}[\lim_{t\to\infty} |X_t(x_0)| = 0] = p\}$, where $p \in (0,1]$.

%%%%%%%%%%%%%%%%%%%%%%%%%%%%%%%%%%%%%%%%%%%%%%%%%%%%%%%%%%%%%%%%%%%%%%%%%%%%%%%%
\section{ACKNOWLEDGMENTS}

The authors are grateful to Dr. Yiming Meng for helpful discussions.

%%%%%%%%%%%%%%%%%%%%%%%%%%%%%%%%%%%%%%%%%%%%%%%%%%%%%%%%%%%%%%%%%%%%%%%%%%%%%%%%

\addtolength{\textheight}{-3cm}   % This command serves to balance the column lengths
                                  % on the last page of the document manually. It shortens
                                  % the textheight of the last page by a suitable amount.
                                  % This command does not take effect until the next page
                                  % so it should come on the page before the last. Make
                                  % sure that you do not shorten the textheight too much.

%%%%%%%%%%%%%%%%%%%%%%%%%%%%%%%%%%%%%%%%%%%%%%%%%%%%%%%%%%%%%%%%%%%%%%%%%%%%%%%%

\bibliographystyle{unsrt}
\bibliography{references}
\end{document}